\declaretheoremstyle[
headfont=\color{blue}\normalfont\bfseries,
bodyfont=\color{blue}\normalfont\itshape,
]{colored}
\theoremstyle{plain}
\newtheorem{theorem}{Theorem}[section]
\newtheorem{cor}[theorem]{Corollary}
\newtheorem{corollary}[theorem]{Corollary}
\newtheorem{prop}[theorem]{Proposition}
\newtheorem{lemma}[theorem]{Lemma}
\theoremstyle{definition}
\newtheorem{example}[theorem]{Example}
\newtheorem{remark}[theorem]{Remark}
\newtheorem{definition}[theorem]{Definition}
\newcommand{\R}{\mathbb{R}}
\newcommand{\N}{\mathbb{N}}
\newcommand{\C}{\mathbb{C}}
\newcommand{\W}{\mathcal{W}}
\newcommand{\Lin}{\mathcal{L}}
\newcommand{\K}{\mathcal{K}}
\newcommand{\eps}{\varepsilon}
\newcommand{\vertiii}[1]{{\left\vert\kern-0.25ex\left\vert\kern-0.25ex\left\vert #1
 \right\vert\kern-0.25ex\right\vert\kern-0.25ex\right\vert}}
\DeclareMathOperator{\re}{Re}
\DeclareMathOperator{\NA}{NA}
\DeclareMathOperator{\QNA}{QNA}
\newcommand{\wQNA}{w^*\!\operatorname{QNA}}
\renewcommand{\leq}{\leqslant}
\renewcommand{\geq}{\geqslant}
\title{Weak-star quasi norm attaining operators}
\author[G.~Choi]{Geunsu Choi}
\address[Choi]{Department of Mathematics Education, Dongguk University, Seoul 04620, Republic of Korea \newline
\href{http://orcid.org/0000-0002-4321-1524}{ORCID: \texttt{0000-0002-4321-1524} }}
\email{\texttt{chlrmstn90@gmail.com}}
\author[M.~Jung]{Mingu Jung}
\address[Jung]{School of Mathematics, Korea Institute for Advanced Study, 02455 Seoul, Republic of Korea \newline
\href{http://orcid.org/0000-0003-2240-2855}{ORCID: \texttt{0000-0003-2240-2855} }}
\email{\texttt{jmingoo@kias.re.kr}}
\urladdr{\url{https://clemg.blog/}}
\author[S.~K.~Kim]{Sun Kwang Kim}
\address[Kim]{Department of Mathematics, Chungbuk National University, Cheongju, Chungbuk 28644, Republic of Korea\newline
	\href{http://orcid.org/0000-0002-9402-2002}{ORCID: \texttt{0000-0002-9402-2002}  }}
\email{\texttt{skk@chungbuk.ac.kr}}
\author[M.~Mart\'in]{Miguel Mart\'in}
\address[Mart\'{\i}n]{Universidad de Granada, Facultad de Ciencias.
Departamento de An\'{a}lisis Matem\'{a}tico, 18071-Granada
(Spain)\newline
	\href{http://orcid.org/0000-0003-4502-798X}{ORCID: \texttt{0000-0003-4502-798X} }}
\email{\texttt{mmartins@ugr.es}}
\urladdr{\url{https://www.ugr.es/~mmartins/}}
\date{September 15th, 2022}
\keywords{Banach space; norm-attaining operators; Radon-Nikod\'{y}m property; remotality; reflexivity}
\subjclass[2010]{Primary 46B04, 46B22; Secondary 46B20, 47B07}
\begin{document}
		
\begin{abstract}
For Banach spaces $X$ and $Y$, a bounded linear operator $T\colon X \longrightarrow Y^*$ is said to weak-star quasi attain its norm if the $\sigma(Y^*,Y)$-closure of the image by $T$ of the unit ball of $X$ intersects the sphere of radius $\|T\|$ centred at the origin in $Y^*$. This notion is inspired by the quasi-norm attainment of operators introduced and studied in \cite{CCJM}. As a main result, we prove that the set of weak-star quasi norm attaining operators is dense in the space of bounded linear operators regardless of the choice of the Banach spaces, furthermore, that the approximating operator can be chosen with additional properties. This allows us to distinguish the properties of weak-star quasi norm attaining operators from those of quasi norm attaining operators. It is also shown that, under certain conditions, weak-star quasi norm attaining operators share numbers of equivalent properties with other types of norm attaining operators, but that there are also a number of situations in which they behave differently from the others.
\end{abstract}

\maketitle

\section{Introduction}
In this paper we deal with Banach spaces defined over the scalar field $\mathbb{K} = \R$ or $\C$. Given a Banach space $X$, we write $B_X$ and $S_X$ to denote the closed unit ball and the unit sphere of $X$, respectively. By $X^*$ we denote the topological dual space of $X$. If $Y$ is a Banach space, $\Lin(X,Y)$ denotes the space of all bounded linear operators from $X$ to $Y$ endowed with the operator norm, $\W(X,Y)$ is the closed subspace of weakly compact operators and $\K(X,Y)$ is the closed subspace of compact operators.

Recall that $T\in \Lin(X,Y)$ \emph{attains its norm} (or it is \emph{norm attaining}, writing $T\in \NA(X,Y)$) if there exists $x\in B_X$ such that $\|T\|=\|Tx\|$ (equivalently, if $T(B_X)\cap \|T\|S_Y\neq \emptyset$). The study of norm attaining operators dates back to the pioneer work of J.~Lindenstrauss in 1963 \cite{Lindenstrauss}, who provided the first negative and positive results concerning the denseness of norm attaining operators. In 1977, J.~Bourgain \cite{B2} showed the relation between the denseness of norm attaining operators and the Radon-Nikod\'{y}m property (RNP in short): a Banach space $X$ has the RNP if and only if $\NA(X',Y)$ is dense in $\Lin(X',Y)$ for every Banach space $Y$ and every space $X'$ isomorphic to $X$ (this formulation actually needs a refinement of Bourgain's result due to R.~Huff \cite{Huff}). A counterpart for range spaces does not hold, as one can find a Banach space $X$ such that $\NA(X,\ell_2)$ is not dense in $\Lin(X,\ell_2)$ \cite{Gowers}. There are even compact operators which cannot be approximated by norm attaining ones \cite{martinjfa} and it is unknown whether finite-rank operators always belong to the closure of the set of norm attaining operators. We refer the reader to \cite{Aco-survey, M-RACSAM} for a detailed account on the study of the denseness of norm attaining operators. 

Very recently, a weaker notion of norm attainment has been introduced in  \cite{CCJM} to obtain a characterization of the RNP for range spaces. A bounded linear operator $T\in \Lin(X,Y)$ \emph{quasi attains its norm} (writing $T\in \QNA(X,Y)$) if $\overline{T(B_X)}\cap \|T\|S_Y\neq \emptyset$, equivalently, if there is a convergent sequence $(y_n)\subset T(B_X)$ such that $\lim_n y_n\in \|T\|S_Y$. Of course, we have both $\NA(X,Y)\subseteq \QNA(X,Y)$ and $\mathcal{K}(X,Y)\subseteq \QNA(X,Y)$. It is known that monomorphisms (i.e., bounded linear operators which are bounded below) in $\QNA(X,Y)$ actually belong to $\NA(X,Y)$ \cite[Lemma~2.1]{CCJM}. This fact allows to show that there are Banach spaces $X$ and $Y$ for which $\QNA(X,Y)$ is not dense in $\Lin(X,Y)$. For instance, if $Y$ fails the RNP, there are $X_1$ and $X_2$ isomorphic to $Y$ such that $\QNA(X_1,X_2)$ is not dense in $\Lin(X_1,X_2)$ \cite[Proposition~2.5]{CCJM}. On the other hand, if $Y$ has the RNP, then $\overline{\QNA(X,Y)}=\Lin(X,Y)$ for every Banach space $X$ \cite[Theorem~3.1]{CCJM}.
This provides the characterization of the RNP for range spaces in terms of the quasi norm attainment which is not valid for the usual norm attainment.
Other interesting results on quasi norm attaining operators including characterizations of the reflexivity and the finite-dimensionality can be found in \cite{CCJM}, where the reader may also find some open questions.

Our goal in this paper is to introduce and study a property similar to the quasi norm attainment, but using a coarser topology in the range space, namely the weak-star topology in the case when the range space is a dual space. Here is the notion.

\begin{definition} Let $X$ and $Y$ be Banach spaces.
	We say that an operator $T \in \Lin (X,Y^*)$ \emph{weak-star quasi attains its norm \textup{(}in short, $T \in \wQNA (X,Y^*)$\textup{)}} if there exist a net $(x_\alpha) \subset B_X$ and a vector $u^* \in \|T\| S_{Y^*}$ such that $Tx_\alpha \xrightarrow{w^*} u^*$. In other words, $T \in \wQNA(X, Y^*)$ if and only if $\overline{T (B_X)}^{w^*} \cap \|T \| S_{Y^*} \neq \emptyset$.
\end{definition}

As the weak-star topology depends on the different preduals, the notion above may also be dependent on the range's predual (if there is more than one). A concrete example of this phenomenon can be found in Remark~\ref{remark:depends-on-the-predual}.

In general, we have
\begin{equation}\label{eq:allcontents}
\NA (X, Y^*) \subseteq \QNA (X, Y^*) \subseteq \wQNA(X, Y^*)\subseteq \Lin(X,Y^*).
\end{equation}
We will discuss later on when the above inclusions are actually equalities.

Our first main result here is that, contrary to the case of the norm attainment and the quasi norm attainment, the set $\wQNA(X,Y^*)$ is norm dense in $\Lin(X,Y^*)$ for all Banach spaces $X$ and $Y$ (Theorem~\ref{wQNA_dense}). Actually, a stronger version of this result holds. Namely, the set of weak-star quasi norm attaining operators from $X$ to $Y^*$ whose `transpositions' (the definition of which is given before Proposition \ref{prop:NA-wQNA}) also weak-star quasi attain their norms is always dense in $\mathcal{L} (X, Y^*)$ (Theorem~\ref{theorem:wQNA-dense-2}). On the contrary, there is a pair $(X,Y^*)$ of Banach spaces for which $\QNA(X,Y^*)$ is not dense in $\Lin(X,Y^*)$ (Example~\ref{prop_c_0_Read}). These results are contained in Section~\ref{section:denseness}.

We devote Section~\ref{section:some-remarks} to provide some results on weak-star quasi norm attaining operators. We characterize weak-star quasi norm attaining operators in terms of the norm attainment of the biadjoint operator (Proposition~\ref{prop:basics}) and deduce from this result that the quasi norm attainment and the weak-star quasi norm attainment are equivalent for weakly compact operators (Corollary~\ref{weakly_compact_QNA_wQNA}). More results related to the weak-star quasi norm attainment and its properties with respect to taking adjoint and transposition of operators are also given (Propositions \ref{prop:triple} and \ref{prop:NA_or_QNA_to_wQNA}). Some interesting examples and remarks are also provided, including an explicit example showing that a weak-star quasi norm attaining monomorphism is not necessarily norm attaining (Remark~\ref{remark-monomorphism}), contrary to what happens for the quasi norm attainment.

Section~\ref{section:relations} includes results related to the possible equalities in the chain of inclusions given in \eqref{eq:allcontents}. We first relate the validity of some equalities with the reflexivity of the domain or range space (Remark~\ref{Remark_reflexivity}). We next show that if $Y$ is separable and non-reflexive, the equality $\QNA(\ell_1,Y^{**})=\wQNA(\ell_1,Y^{**})$ implies that $Y$ contains an isomorphic copy of $\ell_1$ (Theorem~\ref{thm:wQNA:range:ell_1}), while the converse does not hold (Example~\ref{counterexample_sep_Y}). There are more related results on the equality between $\wQNA$ and $\QNA$ (Propositions \ref{Grothendieck_WCG} and \ref{wKK_QNA_wQNA}, and Corollary \ref{corollary:LURorc0}). Next, we characterize the finite-dimensionality of a separable Banach space $Y$ by the fact that every operator from $\ell_1$ to $Y^*$ is weak-star quasi norm attaining (Theorem~\ref{Y_sep_wQNA_ell_1_Y*}). As a consequence, we may obtain a result on remotality of weak-star compact sets (Example~\ref{example:remotal}). Finally, we finish the paper by providing an example which shows that the concept of the weak-star quasi norm attainment depends on the choice of predual (Remark~\ref{remark:depends-on-the-predual}).

\section{Denseness results}\label{section:denseness}

The main result of the paper is the following one.

\begin{theorem}\label{wQNA_dense}For arbitary Banach spaces $X$ and $Y$, the set $\wQNA (X, Y^*)$ is dense in $\Lin(X, Y^*)$. Indeed, given $T\in \Lin (X, Y^*)$ and $\eps>0$, there is a rank-one perturbation $S\in \wQNA (X, Y^*)$ of $T$ such that $\|T-S\|<\eps$.
\end{theorem}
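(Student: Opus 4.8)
The plan is to reduce everything to finding, after a small rank-one perturbation, a functional $u^*\in S_{Y^*}$ of norm $\|S\|$ that lies in the weak-star closure of $S(B_X)$. The natural quantity to exploit is $\|T^{**}\|=\|T\|$ together with the fact that $B_{X^{**}}$ is weak-star compact and $T^{**}$ is weak-star-to-weak-star continuous: there is $x^{**}\in B_{X^{**}}$ with $\|T^{**}x^{**}\|=\|T\|$, by Goldstine plus weak-star compactness. Write $u^*:=T^{**}x^{**}\in\|T\|S_{Y^*}$ (here I use that $T^{**}$ maps $X^{**}$ into $Y^*$ when the range is $Y^*$, identifying $Y^*$ as a subspace of $Y^{***}$; this is the standard fact that the bitranspose of an operator into a dual space corestricts to that dual space). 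By Goldstine again, there is a net $(x_\alpha)\subset B_X$ with $x_\alpha\xrightarrow{w^*}x^{**}$ in $X^{**}$, hence $Tx_\alpha=T^{**}x_\alpha\xrightarrow{w^*}T^{**}x^{**}=u^*$ in $Y^*$. Thus $T$ itself would already be in $\wQNA(X,Y^*)$ — \emph{provided} the sup $\|T^{**}\|$ is attained at some point of $B_{X^{**}}$, which it is. So at first glance $T\in\wQNA(X,Y^*)$ for every $T$, with no perturbation needed.

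That argument looks too strong, so the subtlety the theorem is guarding against must be that $T^{**}$ need not attain its norm \emph{as a functional-valued map in the right sense}: the issue is whether $\|T^{**}x^{**}\|_{Y^*}$, computed as the $Y^*$-norm, is weak-star upper semicontinuous in $x^{**}$. It is weak-star lower semicontinuous (as a sup of $|\langle T^{**}x^{**},y\rangle|$ over $y\in B_Y$), not necessarily upper semicontinuous, so the sup over the weak-star compact set $B_{X^{**}}$ need not be attained. Hence the real content: one must perturb $T$ by a rank-one operator to force attainment. The strategy I would use is the classical Bourgain–Lindenstrauss–Zizler type perturbation adapted to this weak-star setting. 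Choose $x_n\in B_X$ with $\|Tx_n\|\to\|T\|$; by weak-star compactness pass to a subnet so $Tx_n\xrightarrow{w^*}w^*$ for some $w^*\in Y^*$ with $\|w^*\|\le\|T\|$ (the inequality can be strict — that is exactly the obstruction). Now I would add a rank-one bump that pushes the "lost mass" back in: pick a functional realizing the norm in a controlled direction and set $S=T+\phi\otimes v^*$ where $\phi\in S_{X^*}$ is chosen so that $\phi(x_n)\to 1$ along a further subnet (possible after reindexing, since the $x_n$ can be taken with $\|Tx_n\|>\|T\|-\delta$ and one can separate) and $v^*\in Y^*$ is a small vector chosen so that $w^*+v^*$ has norm exactly $\|S\|$ while $\|\phi\otimes v^*\|=\|v^*\|<\eps$. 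Then $Sx_n\xrightarrow{w^*}w^*+v^*$, which I arrange to sit on $\|S\|S_{Y^*}$.

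Concretely, the key steps in order: (1) reduce to $\|T\|=1$ and fix $\eps>0$; (2) produce a sequence $(x_n)\subset B_X$ with $\|Tx_n\|\to 1$ and, after passing to a subnet, $Tx_n\xrightarrow{w^*}w^*\in B_{Y^*}$; (3) using Hahn–Banach pick $y\in S_Y$ and pass to a further subnet so that $\langle Tx_n,y\rangle\to a$ with $|a|$ as close to $1$ as the subnet allows, guaranteeing $w^*\ne 0$ after a harmless normalization, and extract $\phi\in S_{X^*}$ with $\phi(x_n)\to 1$ along the net (replacing $x_n$ by $x_n$ with $\phi(x_n)>1-\eta$, shrinking the ball slightly and rescaling); (4) define $v^*:=\lambda w^*$ or more precisely $v^* := t\,u^*$ for a suitable small $t>0$ and a suitable unit vector $u^*$ chosen so that $\|w^*+v^*\|$ equals $\|T+\phi\otimes v^*\|$ — here one uses that $\|T+\phi\otimes v^*\|\le 1+\|v^*\|$ and $\ge|\langle (T+\phi\otimes v^*)x_n,\cdot\rangle|$ evaluated along the net gives $\ge\|w^*+v^*\|$, and one tunes $t$ so these coincide; (5) set $S:=T+\phi\otimes v^*$, check $\|T-S\|=\|v^*\|<\eps$, and verify $Sx_n=Tx_n+\phi(x_n)v^*\xrightarrow{w^*}w^*+v^*\in\|S\|S_{Y^*}$, so $S\in\wQNA(X,Y^*)$ and it is a rank-one perturbation of $T$.

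The main obstacle is step (4): making the weak-star limit land \emph{exactly} on the sphere of radius $\|S\|$, not merely inside the ball. The norm $\|T+\phi\otimes v^*\|$ is only semicontinuous, so I cannot compute it by a limit; instead I expect to need a continuity/monotonicity argument in the parameter $t$ — as $t$ increases from $0$, $\|w^*+t u^*\|$ grows continuously while $\|T+t\,\phi\otimes u^*\|$ also grows continuously and at $t=0$ we have $\|w^*\|\le\|T\|$ with equality not guaranteed, so one looks for the first $t$ where $\|w^*+t u^*\|=\|T+t\,\phi\otimes u^*\|$; an intermediate-value argument should close this once one checks the reverse inequality $\|w^*+tu^*\|\ge\|T+t\phi\otimes u^*\|-o(1)$ fails to persist, i.e. that the gap is eventually nonnegative. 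Choosing $u^*$ in the direction of $w^*$ (when $w^*\ne0$) simplifies the left side to $(\|w^*\|+t)$ and makes the tuning transparent; the degenerate case $w^*=0$ must be handled separately, but then $\|Tx_n\|\to1$ with $Tx_n\xrightarrow{w^*}0$ forces a direct construction of a different perturbation, or can be excluded by the subnet choice in step (3). I would also double-check the identification $T^{**}(X^{**})\subseteq Y^*$ and the weak-star continuity used implicitly, since the whole perturbation is really happening inside $Y^{***}$ and one must stay in $Y^*$.
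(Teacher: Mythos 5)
You correctly locate the obstruction (the $Y^*$-norm is only weak-star lower semicontinuous, so a norming net $Tx_n\xrightarrow{w^*}w^*$ may have $\|w^*\|<\|T\|$; and your opening ``too strong'' argument also relies on the false identification $T^{**}(X^{**})\subseteq Y^*$, which holds only for weakly compact $T$ --- the correct object is the restriction $T^{**}z^{**}\vert_Y$ as in Proposition~\ref{prop:basics}). But the proposed fix does not close the gap, for two concrete reasons. First, a functional $\phi\in S_{X^*}$ with $\phi(x_n)\to 1$ along the norming net generally does not exist (take $X=\ell_2$ and $x_n=e_n$); ``reindexing'' or ``separating'' does not produce one. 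Second, and decisively, the intermediate-value tuning of step (4) cannot succeed when the norm drop $\|T\|-\|w^*\|$ exceeds $2\eps$: since you insist on $t=\|v^*\|<\eps$, and $\|T+t\,\phi\otimes u^*\|\geq \|T\|-t>\|T\|-\eps$ while $\|w^*+tu^*\|\leq\|w^*\|+t<\|w^*\|+\eps$, the two quantities never meet on the admissible range of $t$ whenever $\|w^*\|\leq\|T\|-2\eps$. This is not a removable degeneracy: for the operator $Se_n=\frac{n}{n+1}e_n$ in $\Lin(\ell_1,c_0^*)$ of Remark~\ref{remark:depends-on-the-predual}, \emph{every} point of $\overline{S(B_{\ell_1})}^{w^*}$ has norm strictly below $\|S\|=1$, so every norming net suffers a genuine drop and no subnet choice avoids it. The root problem is that your perturbation is anchored to one fixed net and its fixed cluster point $w^*$; you have no mechanism allowing the perturbation to \emph{select a different, better point} of the weak-star closure, nor any upper bound on $\|Sx\|$ valid for all $x\in B_X$ that would certify the limit actually sits on the sphere of radius $\|S\|$ rather than merely inside the ball.

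The paper's proof supplies exactly this missing mechanism through the Poliquin--Zizler variational principle (Lemma~\ref{PZ_theorem}): applied to the $w^*$-compact convex set $C=\overline{T(B_X)}^{w^*}\subset Y^*$ and the $w^*$-lower semicontinuous norm function, it yields $y_0\in Y$ with $\|y_0\|<\eps/\|T\|$ such that $\|\cdot\|+\re\langle\cdot,y_0\rangle$ attains its supremum on $C$ at some $y_0^*$. The resulting inequality $\|y^*\|+|y^*(y_0)|\leq\|y_0^*\|+|y_0^*(y_0)|$ for \emph{all} $y^*\in C$ is precisely the global control you lack: it bounds $\|Sx\|$ uniformly over $B_X$ for the rank-one perturbation $Sx=Tx+[Tx](y_0)\,y_0^*/\|y_0^*\|$, and simultaneously identifies the weak-star limit of $Sx_\alpha$ (for any net with $Tx_\alpha\xrightarrow{w^*}y_0^*$) as a vector of exactly that norm. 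Note also that the ``functional'' in this rank-one bump is $x\mapsto[Tx](y_0)$, whose values along the net converge automatically by weak-star convergence, so no almost-norming $\phi$ is needed. If you wish to avoid the variational principle, the paper's alternative route (Zizler's theorem on denseness of operators with norm-attaining adjoint combined with Proposition~\ref{prop:NA-wQNA}) also works, but it sacrifices the rank-one conclusion.
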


To give a proof, we will use the following result from Poliquin and Zizler.

\begin{lemma}[\mbox{\cite[Theorem 1]{PZ}}]\label{PZ_theorem}
Let $X$ be a real Banach space and let $\varphi$ be a $w^*$-lower semicontinuous convex Lipschitz function defined on a $w^*$-compact convex subset $C$ of $X^*$. Given $\eps > 0$, there is a vector $x \in \eps B_X$ such that $\varphi + x$ attains its supremum on $C$ at an extreme point of $C$.
\end{lemma}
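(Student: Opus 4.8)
The plan is to split the statement into an \emph{attainment} part and a \emph{localisation} part, and to dispatch the (formal) localisation part first. Suppose we already know that for a suitable $x\in\eps B_X$ the function $\varphi+x$ attains its supremum $M:=\sup_{c\in C}(\varphi(c)+\langle c,x\rangle)$ on $C$, and set $F:=\{c\in C:\varphi(c)+\langle c,x\rangle=M\}$. Viewing $x$ as a $w^*$-continuous functional on $X^*$, the map $\varphi+x$ is $w^*$-lower semicontinuous, so $F=C\cap\{\varphi+x\ge M\}$ is $w^*$-closed, hence nonempty and $w^*$-compact. Moreover $F$ is an \emph{extreme subset} of $C$: if $c=\lambda a+(1-\lambda)b\in F$ with $a,b\in C$ and $\lambda\in(0,1)$, then convexity of $\varphi$ gives $M=\varphi(c)+\langle c,x\rangle\le \lambda(\varphi(a)+\langle a,x\rangle)+(1-\lambda)(\varphi(b)+\langle b,x\rangle)\le M$, forcing $a,b\in F$. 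A nonempty $w^*$-compact extreme subset of $C$ contains an extreme point of $C$ by the standard minimal--face argument underlying the Krein--Milman theorem (pass via Zorn's lemma to a minimal nonempty $w^*$-compact extreme subset, then separate its points by a $w^*$-continuous functional to see that it is a singleton). This extreme point is the required maximiser, so everything reduces to the attainment part. Note in particular that I would \emph{not} try to invoke Bauer's maximum principle directly, since $\varphi$ is only $w^*$-lower, not upper, semicontinuous.

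For the attainment part I would recast the maximisation geometrically and run a Bishop--Phelps-type argument in $X^*\times\R$. Maximising $\varphi+x$ over $C$ is the same as maximising the $w^*$-continuous affine functional $(c,t)\mapsto t+\langle c,x\rangle$ over the hypograph region $A:=\{(c,t):c\in C,\ t\le\varphi(c)\}$, because raising $t$ up to $\varphi(c)$ only increases the value. The sole obstruction to attainment is precisely that $A$ need not be $w^*$-closed: a maximising net may run off to a point where the $w^*$-lower semicontinuous $\varphi$ drops. To stop this, I would fix a near-maximiser $c_0\in C$ with $\varphi(c_0)>\sup_C\varphi-\delta$, choose an aperture parameter, and form the Bishop--Phelps cone in $X^*\times\R$ generated by the height direction together with a controlled $w^*$-continuous slope. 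Ordering $A$ by this cone and extracting, via completeness together with the $w^*$-compactness of $C$ and the boundedness and $w^*$-lower semicontinuity of $\varphi$, a cone-maximal element above $(c_0,\varphi(c_0))$, one obtains a point supported by a functional of the shape $(c,t)\mapsto t+\langle c,x\rangle$; this functional attains its supremum over $A$ there, which is exactly a maximiser of $\varphi+x$ in $C$.

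The heart of the matter, and the step I expect to be the main obstacle, is the twofold requirement that the perturbation read off from the supporting functional be (a) genuinely $w^*$-continuous, i.e.\ an element of the predual $X$ rather than merely of $X^{**}$, and (b) of norm at most $\eps$. Point (a) is the reason the cone argument must be run intrinsically on the $w^*$-compact set $C$ with $w^*$-continuous data throughout, so that the supporting functional is automatically an evaluation at a point of $X$; this is where $w^*$-compactness of $C$ is indispensable, and where a naive Bishop--Phelps argument in $X^*$, which would only deliver a functional in $X^{**}$, would fail. Point (b) is a bookkeeping matter: one balances the near-maximiser gap $\delta$ against the cone aperture so that shrinking $\delta$ (possible since $\sup_C\varphi$ is finite and $\varphi$ is Lipschitz) forces the supporting slope to be small, quantitatively yielding $\|x\|\le\eps$. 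Combining the attainment produced in this way with the localisation of the first paragraph gives a maximiser at an extreme point of $C$, which is the assertion of the lemma.
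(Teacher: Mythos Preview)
The paper does not prove this lemma at all: it is quoted verbatim as \cite[Theorem~1]{PZ} and then used as a black box in the proof of Theorem~\ref{wQNA_dense}. So there is no ``paper's own proof'' to compare with; I can only assess your argument on its merits.

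Your localisation paragraph is perfectly fine: once attainment is secured, the set of maximisers is a nonempty $w^*$-compact extreme face, and the Krein--Milman minimal-face argument produces an extreme point of $C$ where the supremum is attained.

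The attainment part, however, has a genuine gap precisely at the point you flag as ``the main obstacle''. Your Bishop--Phelps cone $K=\{(y^*,s):\|y^*\|\le\gamma s\}$ is defined via the norm of $X^*$, which is \emph{not} $w^*$-continuous data; so the slogan ``run intrinsically on the $w^*$-compact set with $w^*$-continuous data throughout'' does not describe what you are actually doing. Concretely: the Zorn/completeness step does produce a cone-maximal $(c^*,\varphi(c^*))\in A$ (here the Lipschitz condition makes chains norm-Cauchy, which is fine), and maximality yields
\[
\varphi(c)-\varphi(c^*)<\gamma^{-1}\|c-c^*\|\qquad(c\in C,\ c\neq c^*).
\]
But to pass from this to a \emph{linear} perturbation you must separate $A-(c^*,\varphi(c^*))$ from $K$, and Hahn--Banach in $X^*\times\R$ hands you a functional $(\psi,1)$ with $\psi\in X^{**}$ and $\|\psi\|\le\gamma^{-1}$ such that $\varphi+\psi$ is maximised at $c^*$. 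Nothing in the construction forces $\psi\in X$. Nor can you sidestep this by separating in the $w^*$-topology, because the cone $K$ has empty $w^*$-interior (the norm is $w^*$-lsc but not $w^*$-continuous), so the locally convex Hahn--Banach separation theorem does not apply there. In short, the very feature you identify as the crux --- getting the perturbation into the predual --- is not handled by the mechanism you propose.

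The actual Poliquin--Zizler argument avoids this by working on the domain side rather than the range side: it runs a variational/Baire-category argument over perturbations $x\in X$ from the outset (in the spirit of Stegall's variational principle), so that the perturbation is in $X$ by construction and the issue of pulling a functional back from $X^{**}$ to $X$ never arises. If you want to salvage your approach, you would have to replace the Hahn--Banach step by something that produces an element of $X$ directly --- for instance by exploiting the fact that a $w^*$-lsc convex function on $C$ is a supremum of $w^*$-continuous affine functions --- but that is a substantial additional idea, not bookkeeping.
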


\begin{proof}[Proof of Theorem \ref{wQNA_dense}] Let $\eps>0$ be given. Given a nonzero $T \in \Lin (X, Y^*)$, let $C$ be the $w^*$-compact convex set in $Y^*$ given by $C = \overline{T (B_X)}^{w^*}$. By Lemma~\ref{PZ_theorem} and the well known relation between the real and the complex normed spaces (see,  for instance, \cite[p. 55]{FHHMZ}), there exists $y_0 \in Y$ such that $\|y_0\| < \eps/\|T\|$ and that the function $\| \cdot \| + \re y_0$ attains its supremum on $C$ at some extreme point $y_0^* \in C$. Thus,
\begin{equation*}
\|y^*\| + \re y^* (y_0) \leq \|y_0^*\| + \re y_0^* (y_0)
\end{equation*}
for every $y ^* \in C$. It follows that $\re y_0^* (y_0) = |y_0^* (y_0)| \,(=y_0^* (y_0) )$ and that
\begin{equation}\label{theorem:supremum}
\|y^*\| + | y^* (y_0)| \leq \|y_0^*\| + | y_0^* (y_0) |
\end{equation}
for every $y^* \in C$. Define $S \in \Lin (X, Y^*)$ by
\[
S(x) = Tx + [Tx](y_0) \frac{y_0^*}{\|y_0^*\|} \,\, \text{ for each } x \in X.
\]
Then $\|T - S \| \leq \|T \| \|y_0\| < \eps$ and \eqref{theorem:supremum} implies that $\|S \| \leq \|y_0^*\| + |y_0^* (y_0)|$.
Write $z_0^* = \left( 1 + \frac{y_0^* (y_0)}{\|y_0^*\|}\right)y_0^* \in Y^*$, then $\|z_0^*\| = \|y_0^*\| + |y_0^* (y_0)|$. If we take a net $(x_\alpha) \subset B_X$ such that $Tx_\alpha \xrightarrow{w^*} y_0^*$, then
\[
Sx_\alpha = Tx_\alpha + [Tx_\alpha](y_0) \frac{y_0^*}{\|y_0^*\|} \xrightarrow{w^*} y_0^* + \frac{y_0^* (y_0)}{\|y_0^*\|} y_0^* = z_0^*.
\]
This implies that $\|S \| = \|y_0^*\| + |y_0^* (y_0)| = \|z_0^*\|$ and thus $S \in \wQNA (X, Y^*)$.
\end{proof}

If one does not care of the fact that the new operator is a rank-one perturbation of the original one, it is also possible to prove Theorem \ref{wQNA_dense} without using the result of Poliquin and Zizler. More explicitly, the denseness of the set of weak-star quasi norm attaining operators can be obtained by combining the following Proposition \ref{prop:NA-wQNA} with a result of Zizler \cite[Proposition 4]{Z}.

Before we state and prove the result, we require the following notion: given Banach spaces $X$ and $Y$, consider the isometric isomorphism between $\Lin (X, Y^*)$ and $\Lin (Y, X^*)$ given by
\[
T \in \Lin (X, Y^*) \longmapsto T^\circ \in \Lin (Y, X^*),
\]
where
$[T^\circ y ]( x ) =  [T x] (y)$ for every $y \in Y$ and $x \in X$. Let us call $T^\circ$ the \emph{transposition} of the operator $T$.

\begin{prop}\label{prop:NA-wQNA}
Let $X$ and $Y$ be Banach spaces, and let $T \in \Lin(X,Y^*)$. If $T^* \in \NA(Y^{**},X^*)$, then $T^\circ \in \wQNA(Y,X^*)$.
\end{prop}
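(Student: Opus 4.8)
The plan is to manufacture the weak-star quasi norm attaining net for $T^\circ$ directly from the norming vector of $T^*$ by means of Goldstine's theorem. First I would record the elementary isometric identities $\|T\|=\|T^*\|=\|T^\circ\|$: the first is the standard fact that taking adjoints is an isometry, and the second is built into the definition of the transposition (it is an isometric isomorphism $\Lin(X,Y^*)\to\Lin(Y,X^*)$). By hypothesis there is a vector $y_0^{**}\in B_{Y^{**}}$ with $\|T^*y_0^{**}\|=\|T^*\|=\|T^\circ\|$. I would then set $u^*:=T^*y_0^{**}\in X^*$, so that $u^*\in \|T^\circ\| S_{X^*}$; this vector is the candidate weak-star limit.

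Next, by Goldstine's theorem choose a net $(y_\alpha)\subset B_Y$ with $y_\alpha \xrightarrow{w^*} y_0^{**}$ in $\sigma(Y^{**},Y^*)$. The claim is that $T^\circ y_\alpha \xrightarrow{w^*} u^*$ in $\sigma(X^*,X)$. To see this, fix $x\in X$; by the definition of the transposition, $[T^\circ y_\alpha](x)=[Tx](y_\alpha)$. Now $Tx$ belongs to $Y^*$, which is exactly the predual against which the Goldstine net converges in the weak-star sense, so $[Tx](y_\alpha)\to y_0^{**}(Tx)=[T^*y_0^{**}](x)=u^*(x)$. Since $x\in X$ was arbitrary, $T^\circ y_\alpha \xrightarrow{w^*} u^*$, and therefore $u^*\in \overline{T^\circ(B_Y)}^{w^*}\cap \|T^\circ\| S_{X^*}$, i.e.\ $T^\circ\in\wQNA(Y,X^*)$.

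Since the whole argument reduces to this short computation, there is no genuine obstacle; the only point requiring care is the bookkeeping of the dual pairings, and in particular the observation that the functional $Tx$ lies precisely in the predual $Y^*$ over which the Goldstine net converges. This is what makes the passage from norm attainment of $T^*$ on $B_{Y^{**}}$ to weak-star quasi norm attainment of $T^\circ$ work, and it clarifies why the hypothesis is phrased in terms of $T^*$ attaining its norm on $Y^{**}$ rather than a weaker condition on $T$ or $T^\circ$ themselves.
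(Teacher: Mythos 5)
Your proof is correct and follows essentially the same route as the paper: take the norming vector $y_0^{**}$ for $T^*$, approximate it weak-star by a net in the ball (sphere) of $Y$ via Goldstine, and check pointwise on $X$ that $T^\circ y_\alpha \xrightarrow{w^*} T^*y_0^{**}$, which has norm $\|T^\circ\|$. No issues.
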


\begin{proof}
Suppose that $T^*$ attains its norm at $y_0^{**} \in S_{Y^{**}}$. Choose a net $(y_\alpha) \subseteq S_Y$ which converges weak-star to $y_0^{**}$. Then, we have
$$
[T^\circ y_\alpha](x) = [Tx](y_\alpha) \longrightarrow y_0^{**}(Tx) = [T^*y_0^{**}](x)
$$
for every $x \in X$. That is, $T^\circ y_\alpha$ converges weak-star to $T^*y_0^{**}$ with $\|T^*y_0^{**}\| = \|T^\circ\|$, and this concludes the proof.
\end{proof}

\begin{proof}[Alternative proof of the denseness of $\wQNA (X,Y^*)$]
Let $T \in \Lin (X, Y^*)$ and $\eps >0$ be given. By \cite[Proposition 4]{Z}, we can find $R \in \Lin (Y, X^*)$ in such a way that $\|R-T^\circ\| < \eps$ and $R^* \in \NA(X^{**}, Y^*)$. Using Proposition \ref{prop:NA-wQNA}, we have that $R^\circ \in \wQNA (X, Y^*)$. This completes the proof since $\|T - R^\circ \| = \|T^{\circ} - R\| < \eps$.
\end{proof}

As a matter of fact, we can actually prove the following stronger result.

\begin{theorem}\label{theorem:wQNA-dense-2}
Let $X$ and $Y$ be Banach spaces. Then, the set
\[
\bigl\{ T \in \Lin (X,Y^*)\colon T \in \wQNA (X, Y^*) \ \text{ and } \ T^\circ \in \wQNA (Y, X^*)\bigr\}
\]
is dense in $\Lin (X, Y^*)$.
\end{theorem}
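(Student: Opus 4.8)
The plan is to reduce the two-sided statement to a single instance of the technique underlying Theorem~\ref{wQNA_dense}. For $S\in\Lin(X,Y^*)$ consider the ``diagonal'' operator
\[
\Omega_S\colon X\oplus_\infty Y\longrightarrow Y^*\oplus_1 X^*,\qquad \Omega_S(x,y)=\bigl(Sx,\,S^\circ y\bigr),
\]
whose range space $Y^*\oplus_1 X^*=(Y\oplus_\infty X)^*$ is a dual space and whose norm is $\|\Omega_S\|=\|S\|+\|S^\circ\|=2\|S\|$. Since $\Omega_S(B_{X\oplus_\infty Y})=S(B_X)\times S^\circ(B_Y)$ and the $w^*$-topology of $Y^*\oplus_1 X^*$ is the product of the two $w^*$-topologies, one gets $\overline{\Omega_S(B_{X\oplus_\infty Y})}^{w^*}=\overline{S(B_X)}^{w^*}\times\overline{S^\circ(B_Y)}^{w^*}$, and a point $(u^*,v^*)$ of this product lies on $\|\Omega_S\|S_{Y^*\oplus_1 X^*}$ precisely when $\|u^*\|=\|v^*\|=\|S\|$. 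Hence
\[
\Omega_S\in\wQNA\bigl(X\oplus_\infty Y,\,Y^*\oplus_1 X^*\bigr)\iff \bigl(S\in\wQNA(X,Y^*)\ \text{and}\ S^\circ\in\wQNA(Y,X^*)\bigr).
\]
As $S\mapsto\Omega_S$ is a scaled linear isometry of $\Lin(X,Y^*)$ onto a closed subspace $\mathcal{M}$ of $\Lin(X\oplus_\infty Y,Y^*\oplus_1 X^*)$, it suffices, given $\Omega_T\in\mathcal{M}$ and $\eps>0$, to approximate $\Omega_T$ within $\mathcal{M}$ by elements that weak-star quasi attain their norms.

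To do this I would rerun the Poliquin--Zizler argument from the proof of Theorem~\ref{wQNA_dense}, but applied to the $w^*$-compact convex set $C=\overline{\Omega_T(B_{X\oplus_\infty Y})}^{w^*}=\overline{T(B_X)}^{w^*}\times\overline{T^\circ(B_Y)}^{w^*}\subseteq Y^*\oplus_1 X^*$, keeping track of the product structure. Lemma~\ref{PZ_theorem} (with the standard real/complex reduction) yields $(y_0,x_0)\in Y\oplus_\infty X$ with $\max\{\|y_0\|,\|x_0\|\}$ arbitrarily small such that $\|\cdot\|+\re\langle(y_0,x_0),\cdot\rangle$ attains its supremum over $C$ at an extreme point $(u_0^*,v_0^*)$; because $C$ is a product, $(u_0^*,v_0^*)\in\operatorname{ext}\overline{T(B_X)}^{w^*}\times\operatorname{ext}\overline{T^\circ(B_Y)}^{w^*}$, and the variational inequality decouples into
\[
\|u^*\|+|u^*(y_0)|\le\|u_0^*\|+|u_0^*(y_0)|\quad(u^*\in\overline{T(B_X)}^{w^*}),\qquad u_0^*(y_0)\ge0,
\]
together with the symmetric inequality for $\overline{T^\circ(B_Y)}^{w^*}$ and $x_0$. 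These are exactly the data one obtains by running the construction of Theorem~\ref{wQNA_dense} simultaneously on $T$ and on $T^\circ$ with jointly small perturbation vectors, so one defines the approximant by adding the two transpose-paired rank-one perturbations,
\[
Sx:=Tx+[Tx](y_0)\,\frac{u_0^*}{\|u_0^*\|}+\frac{v_0^*(x)}{\|v_0^*\|}\,Tx_0,
\]
so that $S^\circ y=T^\circ y+[T^\circ y](x_0)\frac{v_0^*}{\|v_0^*\|}+\frac{u_0^*(y)}{\|u_0^*\|}T^\circ y_0$; on each line the first extra term is the ``intended'' perturbation for that side and the second is the transpose of the other side's perturbation. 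One then estimates $\|S-T\|<\eps$ and tries to verify, along nets $(x_\alpha)\subseteq B_X$ with $Tx_\alpha\xrightarrow{w^*}u_0^*$ and $(y_\beta)\subseteq B_Y$ with $T^\circ y_\beta\xrightarrow{w^*}v_0^*$, that $Sx_\alpha$ and $S^\circ y_\beta$ converge weak-star to vectors of norm $\|S\|=\|S^\circ\|$, so that $S\in\mathcal{G}$.

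The main obstacle is precisely the interference between the two perturbations. On the $X$-side the first extra term behaves as in Theorem~\ref{wQNA_dense}, but the second one contributes, in the $w^*$-limit of $Sx_\alpha$, a term $\tfrac{1}{\|v_0^*\|}\bigl(\lim_\alpha v_0^*(x_\alpha)\bigr)Tx_0$; although $\|Tx_0\|=O(\|x_0\|)$ is small, weak-star quasi norm attainment requires \emph{exact} equality with $\|S\|$, so this term cannot simply be absorbed. I expect the resolution to require choosing the witnessing net $(x_\alpha)$ with care --- for instance, so that $v_0^*(x_\alpha)$ tends to a value for which $\tfrac{1}{\|v_0^*\|}(\lim v_0^*(x_\alpha))Tx_0$ aligns in the $w^*$-limit with the positive multiple of $u_0^*$ produced by the main terms, after which the decoupled variational inequality forces the limit to have norm $\|S\|$; establishing the existence of such a net, together with the matching upper bound for $\|S\|$, is the technical core, and is where one must exploit that $u_0^*$ and $v_0^*$ are extreme points of near-maximal norm (close to $\|T\|$) in the respective $w^*$-compact convex sets. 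An alternative, staggered route is to obtain one side first --- via Theorem~\ref{wQNA_dense}, or via Zizler's \cite[Proposition 4]{Z} together with Proposition~\ref{prop:NA-wQNA} applied to $T^\circ$ --- and then to apply the same rank-one construction to the transposition of the resulting operator, choosing the new perturbation direction so as not to disturb the first witnessing net; in either approach, reconciling the two perturbations is the crux.
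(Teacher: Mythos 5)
Your reduction to the diagonal operator $\Omega_S$ is correct as far as it goes: the unit ball of $X\oplus_\infty Y$ is the product of the two unit balls, the weak-star topology of $Y^*\oplus_1 X^*$ is the product of the weak-star topologies, and a point of $\overline{T(B_X)}^{w^*}\times\overline{T^\circ(B_Y)}^{w^*}$ has $\ell_1$-norm $2\|S\|$ exactly when both coordinates have norm $\|S\|$, so your stated equivalence holds; likewise, since the $\ell_1$-norm is additive and $C$ is a product, the Poliquin--Zizler functional decouples and you do get the two one-sided variational inequalities with jointly small $y_0$ and $x_0$. But none of this yields a proof. Theorem~\ref{wQNA_dense} applied to $\Omega_T$ produces a rank-one perturbation with no reason to remain in the subspace $\mathcal{M}$ of diagonal operators, and your explicit candidate $S$ fails at exactly the point you flag: along a net with $Tx_\alpha\xrightarrow{w^*}u_0^*$ (and $v_0^*(x_\alpha)\to\lambda$ after passing to a subnet), the weak-star limit of $Sx_\alpha$ is $\bigl(1+u_0^*(y_0)/\|u_0^*\|\bigr)u_0^*+\lambda\,Tx_0/\|v_0^*\|$, whose norm you can only bound below by $\|u_0^*\|+|u_0^*(y_0)|-\|Tx_0\|$, whereas $\|S\|$ may be as large as $\|u_0^*\|+|u_0^*(y_0)|+\|Tx_0\|$. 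The variational inequality controls the first two summands of $\|Sx\|$ but says nothing about the cross term, and there is no mechanism forcing the two error terms to agree exactly; since weak-star quasi norm attainment requires exact equality with $\|S\|$, this is a genuine gap. The ``staggered'' alternative has the same defect --- the second perturbation can raise the operator norm strictly above the norm of the limit witnessed by the first --- and you correctly identify all of this as the crux without resolving it.

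The paper avoids the interference problem by perturbing at a different level: it identifies $T$ with the bilinear form $\overline{T}\in\Lin(^2 X\times Y)$ and invokes the Lindenstrauss-type iterative construction of Aron, Garc\'ia and Maestre \cite[Theorem~2.2]{AGM}, which produces a single small perturbation $A$ of $\overline{T}$ \emph{together with} sequences $(x_n)\subset S_X$ and $(y_n)\subset S_Y$ satisfying $|A(x_k,y_j)|\geq\|A\|-1/j$ and $|A(x_j,y_k)|\geq\|A\|-1/j$ whenever $k>j$. Any weak-star cluster point $y_0^*$ of $(Sx_n)$ then satisfies $|y_0^*(y_j)|\geq\|A\|-1/j$ for every $j$, hence has norm exactly $\|S\|=\|A\|$, and symmetrically for $S^\circ$ along $(y_n)$. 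The simultaneity of the two attainments is built into the construction of the approximant rather than reconciled afterwards; to complete your argument you would essentially have to reprove such a doubly-indexed estimate, at which point you would be redoing the argument of \cite{AGM}.
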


We need some notation. Denote by $\Lin(^2X \times Y)$ the Banach space of all continuous bilinear forms $A\colon X \times Y \longrightarrow \mathbb{K}$ equipped with the norm
$$
\|A\| = \sup \{|A(x,y)|\colon (x,y) \in B_X \times B_Y\}.
$$
Note that $\Lin (^2 X \times Y)$ can be identified isometrically with $\Lin (X, Y^*)$ in the canonical way. That is, for $A \in \Lin (^2 X \times Y)$, the associated operator $L_A \in \Lin (X,Y^*)$ is given by $[L_A (x)] (y) = A(x,y)$ for $x \in X$ and $y \in Y$.

\begin{proof}[Proof of Theorem~\ref{theorem:wQNA-dense-2}]
Let $T \in \Lin (X, Y^*)$ and $\eps >0$ be given. Consider the bilinear form $\overline{T} \in \mathcal{L}(^2 X\times Y)$ which corresponds to $T$, that is, $\overline{T} (x, y) = [Tx](y)$ for every $x \in X$ and $y \in Y$. Arguing as in the proof of \cite[Theorem 2.2]{AGM}, there exist $A \in \mathcal{L} (^2 X \times Y)$, $(x_n) \subset S_X$ and $(y_n) \subset S_Y$ such that $\| A - \overline{T}\| < \eps$ and
\begin{equation}\label{eq:B_xy}
|A(x_k, y_j)| \geq \|A \| - \frac{1}{j}\ \ \text{ and }\ \ |A(x_j, y_k)|  \geq  \|A\| -\frac{1}{j}  \text{ whenever } k > j.
\end{equation}
Let $S \in \Lin (X, Y^*)$ be defined as $[Sx] (y) = A(x, y)$ for every $x \in X$ and $y \in Y$. It is clear that $\|S \| = \|A\|$ and $\| S - T \| < \eps$. To see that $S \in \wQNA (X, Y^*)$, let $(x_\alpha)$ be a subnet of $(x_n)$ such that $Sx_\alpha$ converges weak-star to some $y_0^* \in \| S\| B_{Y^*}$. By \eqref{eq:B_xy}, we can observe that
\[
|y_0^* (y_j) | =\lim_\alpha |A(x_\alpha, y_j)| \geq \|A\| - \frac{1}{j}
\]
for every $j \in \N$; hence $\|y_0^* \| = \|A\|$ which implies that $S \in \wQNA(X, Y^*)$. Similarly, let $(y_\beta)$ be a subnet of $(y_n)$ such that $S^\circ (y_\beta)$ converges weak-star to $x_0^*$ for some $x_0^* \in \|S^\circ\| B_{X^*}$. Using \eqref{eq:B_xy} again, one can similarly check that $\| x_0^*\| = \|S^\circ\|$ and so $S^\circ \in \wQNA (Y, X^*)$.
\end{proof}

We note that Theorem \ref{theorem:wQNA-dense-2} cannot be obtained directly from Theorem \ref{wQNA_dense} since there exist operators $T \in \wQNA (X,Y^*)$ for which $T^\circ \not\in \wQNA (Y, X^*)$ (see Remark~\ref{rem:circle_example}). On the other hand, unlike the case of weak-star quasi norm attaining operators, there exist Banach spaces $X$ and $Y$ such that $\QNA (X , Y^*)$ is not dense in $\Lin (X, Y^*)$.

\begin{example}\label{prop_c_0_Read}
Let $Y$ be an equivalent renorming of $c_0$ such that $Y^{**}$ is strictly convex. Then, $\overline{\QNA (c_0 , Y^{**})} \neq \Lin (c_0 , Y^{**})$.
\end{example}

\begin{proof}
We have that $\NA (c_0, Y^{**}) \subset \mathcal{F}(c_0, Y^{**})$ (see \cite[Lemma 2]{martinjfa}). Thus, the formal identity map $T$ from $c_0$ to $Y^{**}$ does not belong to $\overline{\NA (c_0, Y^{**})}$. Being a monomorphism, $T$ cannot belong to $\overline{\QNA (c_0, Y^{**})}$ by \cite[Lemma~2.2]{CCJM}.
\end{proof}

In particular, we have $\QNA ( c_0, \mathcal{R}^{**}) \neq \wQNA (c_0, \mathcal{R}^{**})$ when $\mathcal{R}$ is the Read's space (see \cite[Theorem 4]{KGM}) by using Theorem~\ref{wQNA_dense} and Example~\ref{prop_c_0_Read}. We will provide more examples of this kind in Section~\ref{section:relations}.

\section{Some remarks on weak-star quasi norm attaining operators}\label{section:some-remarks}

The main goal in this section is to provide some results on weak-star quasi norm attaining operators which can be used to better understand the concept. Some of them are analogous to the quasi norm attaining case, but there are other ones which are not. Our first result characterizes the elements of $\wQNA(X,Y^*)$ in terms of the biadjoint operators.

\begin{prop}\label{prop:basics}
Let $X$ and $Y$ be Banach spaces, and $T \in \Lin (X,Y^*)$. Then, the following are equivalent.
\begin{enumerate}
\setlength\itemsep{0.3em}
\item[\textup{(a)}] $T \in \wQNA(X,Y^*)$.
\item[\textup{(b)}] $T^{**}$ attains its norm at some $z^{**} \in S_{X^{**}}$ for which $\| T^{**} z^{**} \vert_Y  \| = \|T\|$.
\end{enumerate}
\end{prop}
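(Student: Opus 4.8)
The plan is to reduce the proposition to one elementary observation about the set $\overline{T(B_X)}^{w^*}$, and then read off both implications from it. Denote by $\kappa_X\colon X\to X^{**}$ the canonical embedding, and recall the identity $T^{**}\circ\kappa_X=\kappa_{Y^*}\circ T$; evaluating at $\kappa_Y y$ for $y\in Y$ this says that, identifying $x\in X$ with $\kappa_X x$, one has $(T^{**}x)\vert_Y=Tx$, where $(\,\cdot\,)\vert_Y$ denotes the restriction of an element of $Y^{***}=(Y^{**})^*$ to $Y\subseteq Y^{**}$.

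The observation I would isolate is
\[
\overline{T(B_X)}^{w^*}\ =\ \bigl\{\,(T^{**}z^{**})\vert_Y \ :\ z^{**}\in B_{X^{**}}\,\bigr\}.
\]
For the inclusion ``$\supseteq$'': given $z^{**}\in B_{X^{**}}$, Goldstine's theorem provides a net $(x_\alpha)\subseteq B_X$ with $\kappa_X x_\alpha\xrightarrow{w^*}z^{**}$; since $T^{**}$ is $\sigma(X^{**},X^*)$-to-$\sigma(Y^{***},Y^{**})$ continuous and $Y\subseteq Y^{**}$, testing against vectors of $Y$ gives $Tx_\alpha=(T^{**}x_\alpha)\vert_Y\xrightarrow{w^*}(T^{**}z^{**})\vert_Y$ in $(Y^*,\sigma(Y^*,Y))$, so $(T^{**}z^{**})\vert_Y\in\overline{T(B_X)}^{w^*}$. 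For ``$\subseteq$'': if $u^*\in\overline{T(B_X)}^{w^*}$, take a net $(x_\alpha)\subseteq B_X$ with $Tx_\alpha\xrightarrow{w^*}u^*$; by $w^*$-compactness of $B_{X^{**}}$ pass to a subnet along which $\kappa_X x_\alpha\xrightarrow{w^*}z^{**}$ for some $z^{**}\in B_{X^{**}}$, and conclude as before that $Tx_\alpha\xrightarrow{w^*}(T^{**}z^{**})\vert_Y$ along this subnet; since $(Y^*,w^*)$ is Hausdorff, $u^*=(T^{**}z^{**})\vert_Y$.

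Granting this, both implications are short. Note first that $\|(T^{**}z^{**})\vert_Y\|\leq\|T^{**}z^{**}\|\leq\|T^{**}\|\,\|z^{**}\|=\|T\|\,\|z^{**}\|$ for every $z^{**}$. If $T\in\wQNA(X,Y^*)$ (and $T\neq 0$, the remaining case being immediate), there is $z^{**}\in B_{X^{**}}$ with $\|(T^{**}z^{**})\vert_Y\|=\|T\|$; the displayed inequalities force $\|z^{**}\|=1$ and $\|T^{**}z^{**}\|=\|T\|=\|T^{**}\|$, so $T^{**}$ attains its norm at $z^{**}\in S_{X^{**}}$ with $\|T^{**}z^{**}\vert_Y\|=\|T\|$, which is (b). Conversely, if (b) holds then $(T^{**}z^{**})\vert_Y\in\overline{T(B_X)}^{w^*}$ by the observation and has norm $\|T\|$, so it lies in $\overline{T(B_X)}^{w^*}\cap\|T\|S_{Y^*}$; hence $T\in\wQNA(X,Y^*)$.

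I do not anticipate a genuine obstacle: the content is the observation above, and the only delicate point is keeping the three dual levels and their weak-star topologies straight --- in particular that $\sigma(Y^{***},Y^{**})$-convergence of a net lying in $T(B_X)\subseteq Y^*$, tested only against $Y$, already yields $\sigma(Y^*,Y)$-convergence --- together with the routine subnet-plus-uniqueness step in the ``$\subseteq$'' inclusion.
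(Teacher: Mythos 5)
Your proof is correct and follows essentially the same route as the paper's: both rest on Goldstine's theorem, the weak-star compactness of $B_{X^{**}}$, and the $w^*$-to-$w^*$ continuity of $T^{**}$, together with the identification of weak-star limits of $(Tx_\alpha)$ in $Y^*$ with restrictions $(T^{**}z^{**})\vert_Y$. The only difference is organizational: you isolate the set identity $\overline{T(B_X)}^{w^*}=\bigl\{(T^{**}z^{**})\vert_Y : z^{**}\in B_{X^{**}}\bigr\}$ as a standalone observation and make explicit the norm bookkeeping forcing $\|z^{**}\|=1$, whereas the paper runs the same computations inline in each implication.
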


\begin{proof}
$(a) \Rightarrow (b)$: Let $(x_\alpha)$ be a net in $S_X$ so that $Tx_\alpha \xrightarrow{w^*} y^* \in \|T\|S_{Y^*}$ in $Y^*$. Note that $x_\alpha \xrightarrow{w^*} z^{**}$ in $B_{X^{**}}$ for some $z^{**} \in B_{X^{**}}$, thus $Tx_\alpha \xrightarrow{w^*} T^{**} z^{**}$ in $Y^{***}$. It follows that $y^* (y) = T^{**} z^{**}  \vert_{Y} (y)$ for every $y \in Y$. Note that
$$
\|T^{**} z^{**}\|\geq \|T^{**} z^{**}\vert_Y \| = \| y^* \| = \|T\|
$$
and so $T^{**}$ attains its norm at $z^{**}$.

$(b) \Rightarrow (a)$: Let $(x_\alpha)$ be a net in $S_X$ so that $x_\alpha \xrightarrow{w^*} z^{**}$ in $X^{**}$. Passing to a subnet, we may assume that $(T x_\alpha)$ converges weak-star to some $y^* \in \|T\| B_{Y^*}$ in $Y^*$. Note also that $Tx_\alpha \xrightarrow{w^*} T^{**} z^{**}$ in $Y^{***}$. Therefore, $T^{**} z^{**} (y) = y^* (y)$ for every $y \in Y$; hence $\| T \| = \| T^{**} z^{**} \vert_Y  \| = \| y^*\|$. It follows that $T \in \wQNA(X,Y^*)$.
\end{proof}

A first consequence of the above result is that weak-star quasi norm attaining weakly compact operators are actually quasi norm attaining.

\begin{corollary}\label{weakly_compact_QNA_wQNA}
Let $X$ and $Y$ be Banach spaces, and $T \in \W (X, Y^*)$. Then the following are equivalent.
\begin{enumerate}
\setlength\itemsep{0.3em}
\item[\textup{(a)}] $T \in \wQNA(X,Y^*)$.
\item[\textup{(b)}] $T \in \QNA(X,Y^*)$.
\end{enumerate}
\end{corollary}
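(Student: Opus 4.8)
The implication $(b)\Rightarrow(a)$ is nothing but the inclusion $\QNA(X,Y^*)\subseteq\wQNA(X,Y^*)$ already recorded in \eqref{eq:allcontents}, so the entire content of the statement is $(a)\Rightarrow(b)$, and the proof must use weak compactness of $T$ in an essential way.

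For $(a)\Rightarrow(b)$ my plan is to argue directly from the definition of $\wQNA(X,Y^*)$. Fix a net $(x_\alpha)\subset B_X$ and a vector $u^*\in\|T\|S_{Y^*}$ with $Tx_\alpha\xrightarrow{w^*}u^*$ in $Y^*$. Since $T$ is weakly compact, $T(B_X)$ is relatively weakly compact in $Y^*$, so $(Tx_\alpha)$ admits a subnet $(Tx_\beta)$ converging weakly in $Y^*$ to some $v^*\in Y^*$. As the weak topology $\sigma(Y^*,Y^{**})$ is finer than the weak-star topology $\sigma(Y^*,Y)$, we also have $Tx_\beta\xrightarrow{w^*}v^*$; but $(Tx_\beta)$ is a subnet of the weak-star convergent net $(Tx_\alpha)$, and weak-star limits are unique, so $v^*=u^*$. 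Hence $u^*$ lies in the weak closure of $T(B_X)$. Because $T$ is linear and $B_X$ is convex, $T(B_X)$ is convex, so by Mazur's theorem its weak closure coincides with its norm closure; therefore $u^*\in\overline{T(B_X)}^{\,\|\cdot\|}$ with $\|u^*\|=\|T\|$, which is exactly $T\in\QNA(X,Y^*)$.

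Alternatively, and perhaps more in the spirit of presenting this as a consequence of the preceding result, one can route through Proposition~\ref{prop:basics}: by $(a)$ there is $z^{**}\in S_{X^{**}}$ at which $T^{**}$ attains its norm with $\|T^{**}z^{**}\vert_Y\|=\|T\|$; weak compactness of $T$ forces $T^{**}(X^{**})$ to lie inside the canonical copy of $Y^*$ in $Y^{***}$, so $T^{**}z^{**}$ is (the image of) a vector $u^*\in Y^*$ whose restriction to $Y$ recovers $u^*$ isometrically, whence $\|u^*\|=\|T\|$; then approximating $z^{**}$ weak-star by a net $(x_\alpha)\subset B_X$ via Goldstine and checking that $Tx_\alpha$ in fact converges \emph{weakly}, not merely weak-star, to $u^*$ in $Y^*$, one finishes with Mazur's theorem as before. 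In either approach there is no serious obstacle; the one point demanding care is the upgrade of the weak-star information supplied by $\wQNA$ to genuine weak information, which is precisely where relative weak compactness of $T(B_X)$ (equivalently, $T^{**}$ landing inside $Y^*$) is indispensable and where the hypothesis $T\in\W(X,Y^*)$ is consumed.
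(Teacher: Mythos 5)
Your proposal is correct and matches the paper's own treatment: the paper's ``alternative proof'' is exactly your first argument (extract a weakly convergent subnet of $(Tx_\alpha)$ using relative weak compactness of $T(B_X)$, identify the weak limit with $u^*$, and conclude $u^*\in\overline{T(B_X)}^{w}=\overline{T(B_X)}^{\|\cdot\|}$ by convexity), and your second sketch is the paper's primary route via Proposition~\ref{prop:basics}. The only difference is cosmetic: you make the Mazur step explicit where the paper leaves it implicit.
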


\begin{proof}
We only need to show $(a) \Rightarrow (b)$. Fix $T\in \wQNA(X,Y^*)$. By Proposition~\ref{prop:basics}, $T^{**}\in \NA(X^{**},Y^{***})$ and thus, being $T$ weakly compact, \cite[Proposition 5.1]{CCJM} gives that $T\in \QNA(X,Y^*)$.

An alternative proof without using neither Proposition~\ref{prop:basics} nor \cite[Proposition 5.1]{CCJM} can be given as follows. Choose a net $(x_\alpha) \subset S_X$ and $u^* \in \|T \| S_{Y^*}$ so that $Tx_\alpha \xrightarrow{w^*} u^*$. As $T \in \W (X, Y^*)$, we may choose a subnet $(x_\beta)$ such that $(Tx_\beta)$ is weakly convergent. Note that $Tx_\beta \xrightarrow{w} u^*$ since $Tx_\beta \xrightarrow{w^*} u^*$. It follows that $u^* \in \overline{T(B_X) }^{w} \cap \|T\| S_{Y^*}$; hence $T \in \QNA (X , Y^*)$.
\end{proof}

For operators whose range is a bidual space, we have the following result.

\begin{prop}\label{prop:triple}
Let $X$ and $Y$ be Banach spaces. If $T \in \mathcal{L} (X, Y^{**})$ satisfies that $T(X) \subseteq Y$ and $T^* \in \NA (Y^{***}, X^*)$, then $T \in \wQNA (X,Y^{**})$.
\end{prop}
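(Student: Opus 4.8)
The plan is to reduce the statement to an ordinary norm‑attainment fact for an operator into a genuine predual, and then close the argument with a weak‑star compactness argument. Since $T(X)\subseteq Y$, the operator $T$ factors through the canonical isometric embedding $j_Y\colon Y\hookrightarrow Y^{**}$; write $T=j_Y\circ\widetilde T$, where $\widetilde T\in\Lin(X,Y)$ is the corestriction of $T$, so that $\|\widetilde T\|=\|T\|$. We may assume $\|T\|=1$, the case $T=0$ being trivial. Taking adjoints, $T^*=\widetilde T^*\circ j_Y^*$, where $j_Y^*\colon Y^{***}\to Y^*$ is the canonical restriction map and has norm one.

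Next I would extract from the hypothesis a norm‑one norming functional living in $Y^*$. Suppose $T^*$ attains its norm at $\phi\in S_{Y^{***}}$ and set $\psi:=j_Y^*\phi\in B_{Y^*}$. Then $1=\|T\|=\|T^*\phi\|=\|\widetilde T^*\psi\|\leq\|\widetilde T^*\|\,\|\psi\|\leq 1$, which forces $\|\psi\|=1$ and $\|\widetilde T^*\psi\|=\|\widetilde T^*\|=1$; in other words $\widetilde T^*\in\NA(Y^*,X^*)$, attained at $\psi$. Unwinding the definition of the norm of $\widetilde T^*$, there is a net $(x_\alpha)\subseteq B_X$ with $|\langle\widetilde T x_\alpha,\psi\rangle|\to 1$, and after multiplying each $x_\alpha$ by a suitable scalar of modulus one we may arrange $\langle\widetilde T x_\alpha,\psi\rangle\to 1$.

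Then I would run the compactness step. The net $(\widetilde T x_\alpha)$ lies in $B_Y\subseteq B_{Y^{**}}$, and $B_{Y^{**}}$ is $\sigma(Y^{**},Y^*)$‑compact, so we may pass to a subnet $(x_\beta)$ with $\widetilde T x_\beta\xrightarrow{w^*}u^{**}$ for some $u^{**}\in B_{Y^{**}}$. Since $\psi\in Y^*$, we get $\langle u^{**},\psi\rangle=\lim_\beta\langle\widetilde T x_\beta,\psi\rangle=1$, so $\|u^{**}\|\geq 1$ and hence $\|u^{**}\|=1=\|T\|$. Finally, under the canonical identification $Tx_\beta=j_Y(\widetilde T x_\beta)$ is just $\widetilde T x_\beta$ regarded in $Y^{**}$, so $Tx_\beta\xrightarrow{w^*}u^{**}$ in $Y^{**}$, whence $u^{**}\in\overline{T(B_X)}^{w^*}\cap\|T\|S_{Y^{**}}$ and $T\in\wQNA(X,Y^{**})$.

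The proof is short, and the only mildly delicate point is the bookkeeping in the first paragraph: recognizing that $T^*$ factors as $\widetilde T^*\circ j_Y^*$ through the restriction $Y^{***}\to Y^*$ is exactly what allows the norming functional $\phi\in Y^{***}$ to descend to a norm‑one norming functional $\psi\in Y^*$ for $\widetilde T^*$, and the assumption $T(X)\subseteq Y$ is used precisely here and again to ensure that the approximating net $(\widetilde T x_\beta)$ stays inside $Y$, so that its weak‑star limit in $Y^{**}$ is detected by $\psi$. Alternatively, one can avoid the explicit net: observing that the transposition $T^\circ\in\Lin(Y^*,X^*)$ coincides with $\widetilde T^*$, the above shows $T^\circ\in\NA(Y^*,X^*)$, hence its adjoint $(T^\circ)^*=\widetilde T^{**}\in\NA(X^{**},Y^{**})$ (the adjoint of a norm‑attaining operator attains its norm), and then Proposition~\ref{prop:NA-wQNA} applied to $T^\circ$ yields $T=(T^\circ)^\circ\in\wQNA(X,Y^{**})$.
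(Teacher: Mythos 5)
Your proof is correct and follows essentially the same route as the paper's: both project the norming functional from $Y^{***}$ down to $Y^{*}$ (using $T(X)\subseteq Y$ so that it still detects the image of $T$), take an approximating net in $B_X$, and pass to a weak-star convergent subnet whose limit is normed by that functional. The alternative you sketch at the end --- observing $T^\circ=\widetilde T^{*}\in\NA(Y^{*},X^{*})$, hence $(T^\circ)^{*}\in\NA(X^{**},Y^{**})$, and invoking Proposition~\ref{prop:NA-wQNA} --- is also valid and slightly slicker, though it is not the route the paper takes.
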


\begin{proof}
Suppose that $\|T^* \| = \|T^* y_0^{***}\|$ for some $y_0^{***} \in S_{Y^{***}}$. Let $(x_n)$ be a sequence in $S_X$ so that $|[T^* y_0^{***}](x_n)| \longrightarrow \|T^* y_0^{***} \|$. Then
\begin{equation}\label{eq:triple}
|y_0^* (T x_n)| =|[T^* y_0^{***} ](x_n)| \longrightarrow  \|T^* y_0^{***} \|,
\end{equation}
where $y_0^* = Py_0^{***}$ and $P\colon Y^{***} \longrightarrow Y^*$ is the canonical norm one projection.
Passing to a subnet, we may assume that $(Tx_\alpha)$ converges weak-star to some $u^{**} \in \|T \| B_{Y^{**}}$. Note from \eqref{eq:triple} that $|y_0^* (u^{**})| = \|T^* \|$ which shows $\| u^{**}\| = \|T\|$. This proves that $T \in \wQNA (X, Y^{**})$.
\end{proof}

The following example shows that the converse of Proposition \ref{prop:triple} does not hold in general. That is, for a given $T \in \Lin (X, Y^{**})$, we cannot conclude that $T^* \in \NA (Y^{***}, X^*)$ under the assumptions that $T(X) \subseteq Y$ and $T \in \wQNA (X, Y^{**})$.

\begin{example}\label{prop:wQNA(c_0,ell_infty)}
There exists $T \in \wQNA ( c_0, \ell_1^* )$ such that $T(c_0) \subset c_0$ while $T^* \notin \NA (\ell_\infty^*, \ell_1)$. In particular, $\QNA (c_0, \ell_1^*) \neq \wQNA (c_0, \ell_1^*)$.
\end{example}

\begin{proof}
Consider $T \in \Lin (c_0, \ell_1^*)$ given by $Tx = (\frac{n}{n+1} x_n )_{n \in \N}$ for every $x = (x_n)_{n\in \N} \in c_0$. It is easy to check that $\|T \| = 1$. Note that
\[
T(\underbrace{1, \ldots, 1}_{n\text{-many terms}}, 0, 0, \ldots) = \left( \frac{1}{2}, \ldots, \frac{n}{n+1}, 0, 0, \ldots \right) \xrightarrow{w^*} y_0 := \left( \frac{1}{2}, \ldots, \frac{n}{n+1}, \frac{n+1}{n+2}, \ldots \right).
\]
As $\| y_0 \| = 1$, we get that $T \in \wQNA (c_0, \ell_1^*)$. However, we claim that $T^*$ does not attain its norm. Assume that $\| T^* \varphi_0\| = \|T^* \| = 1$ for some $\varphi_0 \in S_{\ell_{\infty}^*}$. If we let $u = (u_n)_{n\in \N} := \varphi \vert_{c_0} \in B_{\ell_1}$, then we have
\begin{align*}
1= \|T^* \varphi_0 \| &=  \sum_{n=1}^{\infty}\Bigl| [T^* \varphi_0] (0, \ldots, 0, \underbrace{1}_{n\text{-th term}}, 0, \ldots) \Bigr| \\
&= \sum_{n=1}^{\infty}  \Bigl| \varphi_0 (T (0, \ldots, 0, \underbrace{1}_{n\text{-th term}}, 0, \ldots) )\Bigr|  \\
&= \sum_{n=1}^{\infty} \Bigl| u  \Bigl(0, \ldots, 0, \underbrace{\frac{n}{n+1}}_{n\text{-th term}}, 0, \ldots\Bigr) \Bigr| \leq \sum_{n=1}^{\infty}  \frac{n}{n+1}| u_n | < \sum_{n=1}^\infty |u_n| \leq 1,
\end{align*}
which is a contradiction. It follows that $T^*$ does not belong to $\NA (\ell_\infty^*, \ell_1)$. The second statement follows from the fact that $T \in \QNA (X, Y)$, then $T^* \in \NA (Y^*, X^*)$ (\cite[Proposition 3.3]{CCJM}).
\end{proof}

We can extract more information on the example above. Recall that monomorphisms which quasi attain their norms are actually norm attaining operators \cite[Lemma~2.1]{CCJM}. From Example \ref{prop_c_0_Read}, we can deduce that the same result is not true for weak-star quasi norm attaining operators. Related to this observation, we have the following remark.

\begin{remark}\label{remark-monomorphism}
Notice that the above example also shows that a weak-star quasi norm attaining monomorphism is not necessarily norm attaining. Indeed, the operator $T$ considered in the proof of Example \ref{prop:wQNA(c_0,ell_infty)} satisfies that $\|T x \| \geq \|x \|/2$ for every $x \in {c_0}$. Thus, $T$ is a monomorphism and belongs to $\wQNA (c_0 , \ell_1^*)$, but $T$ does not attain its norm.
\end{remark}

For adjoint operators, the weak-star quasi norm attainment is equivalent to the quasi norm attainment and to the usual norm attainment.

\begin{remark}
Suppose that $T \in \Lin (X^*, Y^*) $ satisfies $T = S^*$ for some $S \in \Lin (Y, X)$. From the weak-star compactness of $B_{X^*}$ and the weak-star to weak-star continuity of $T$, we get
\[
T \in \NA (X^*, Y^*) \Longleftrightarrow T \in \QNA (X^*, Y^*) \Longleftrightarrow T \in \wQNA(X^*, Y^*).
\]
\end{remark}

The relation with respect to the transposition of operators is given in the following result.
Let us say that an operator $T \in \Lin (X,Y^*)$ weak-star quasi attains its norm \emph{towards} $y^*$ in the case when $y^* \in \overline{T (B_X)}^{w^*} \cap \|T \| S_{Y^*}$.

\begin{prop}\label{prop:NA_or_QNA_to_wQNA}
Let $X$ and $Y$ be Banach spaces.
\begin{enumerate}
\setlength\itemsep{0.3em}
\item[\textup{(a)}] If $T \in \QNA(X,Y^*)$, then $T^\circ \in \wQNA(Y,X^*)$.
\item[\textup{(b)}] $T \in \Lin (X, Y^*)$ satisfies $T^\circ \in \NA (Y, X^*)$ if and only if $T \in \wQNA (X, Y^*)$ towards $y^* \in \NA(Y, \mathbb{K})$.
\item[\textup{(c)}] If $Y$ is separable, then $T^\circ \in \QNA (Y, X^*)$ implies that $T \in \wQNA (X, Y^*)$.
\end{enumerate}
\end{prop}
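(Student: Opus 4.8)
For this item I would simply combine two facts available in the excerpt: by \cite[Proposition 3.3]{CCJM}, $T\in\QNA(X,Y^*)$ forces $T^*\in\NA(Y^{**},X^*)$, and then Proposition~\ref{prop:NA-wQNA} yields $T^\circ\in\wQNA(Y,X^*)$. A direct argument is also available and instructive: pick $(x_n)\subset B_X$ with $Tx_n\to y^*$ in norm and $\|y^*\|=\|T\|$, choose $(y_k)\subset S_Y$ with $y^*(y_k)\to\|T\|$, let $x^*$ be a $w^*$-cluster point of the net $(T^\circ y_k)\subset\|T\|B_{X^*}$, and observe that $x^*(x_n)$ lies within $\|Tx_n-y^*\|$ of $\|T\|$, so that $\|x^*\|=\|T\|$ and $x^*\in\overline{T^\circ(B_Y)}^{w^*}\cap\|T\|S_{X^*}$.

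\textbf{Part (b).} Here I would just unwind the definitions. If $T^\circ$ attains its norm at $y_0\in S_Y$, then $\sup_{x\in B_X}|[Tx](y_0)|=\|T^\circ y_0\|=\|T\|$; taking $(x_n)\subset B_X$ with $|[Tx_n](y_0)|\to\|T\|$ and a $w^*$-cluster point $y^*$ of $(Tx_n)$ inside $\|T\|B_{Y^*}$, one gets $|y^*(y_0)|=\|T\|$, hence $\|y^*\|=\|T\|$; so $y^*\in\overline{T(B_X)}^{w^*}\cap\|T\|S_{Y^*}$ and $y^*$ attains its norm at $y_0$, i.e.\ $y^*\in\NA(Y,\KK)$. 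Conversely, if $Tx_\alpha\xrightarrow{w^*}y^*$ with $\|y^*\|=\|T\|$ and $|y^*(y_0)|=\|y^*\|$ for some $y_0\in S_Y$, then $[T^\circ y_0](x_\alpha)=[Tx_\alpha](y_0)\to y^*(y_0)$, whence $\|T^\circ y_0\|\geq|y^*(y_0)|=\|T\|=\|T^\circ\|$ and $T^\circ\in\NA(Y,X^*)$.

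\textbf{Part (c).} My plan is to apply \cite[Proposition 3.3]{CCJM} to the operator $T^\circ\in\Lin(Y,X^*)$, obtaining $(T^\circ)^*\in\NA(X^{**},Y^*)$, say $\|(T^\circ)^*z^{**}\|=\|(T^\circ)^*\|=\|T\|$ with $z^{**}\in S_{X^{**}}$. The key point is that $(T^\circ)^*$ restricted to $X\subset X^{**}$ coincides with $T$ (since $[(T^\circ)^*x](y)=x(T^\circ y)=[Tx](y)$) and that $(T^\circ)^*$, being an adjoint, is $\sigma(X^{**},X^*)$-to-$\sigma(Y^*,Y)$ continuous. Taking, via Goldstine's theorem, a net $(x_\alpha)\subset B_X$ with $x_\alpha\xrightarrow{w^*}z^{**}$, we get $Tx_\alpha=(T^\circ)^*x_\alpha\xrightarrow{w^*}(T^\circ)^*z^{**}=:y^*$ with $\|y^*\|=\|T\|$, so $y^*\in\overline{T(B_X)}^{w^*}\cap\|T\|S_{Y^*}$ and $T\in\wQNA(X,Y^*)$. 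Separability of $Y$ is actually not indispensable for this route, but it permits a self-contained sequential version avoiding \cite[Proposition 3.3]{CCJM}: if $T^\circ y_n\to x_0^*$ in norm with $\|x_0^*\|=\|T\|$, and $(x_n)\subset B_X$ is chosen so that $x_0^*(x_n)\to\|T\|$, then $w^*$-sequential compactness of $\|T\|B_{Y^*}$ (valid since $Y$ is separable) gives a subsequence $Tx_{n_k}\xrightarrow{w^*}y_0^*\in\|T\|B_{Y^*}$, and since $[Tx_{n_k}](y_m)-x_0^*(x_{n_k})$ has modulus at most $\|T^\circ y_m-x_0^*\|$ for all $k$, one obtains $y_0^*(y_m)\to\|T\|$ as $m\to\infty$, forcing $\|y_0^*\|=\|T\|$.

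\textbf{Where the difficulty is.} Parts (a) and (b) are essentially bookkeeping once (a) is reduced to Proposition~\ref{prop:NA-wQNA}; in all three parts the only delicate point is that a $w^*$-cluster point of a bounded net automatically sits in a ball of the right radius, and one must confirm it reaches the \emph{sphere}. That is exactly the crux of (c): the $w^*$-limit of the images $Tx_n$ has norm at most $\|T\|$ a priori, and upgrading this to equality requires interchanging the norm-limit of $(T^\circ y_n)$ with a $w^*$-limit of $(Tx_{n_k})$. Packaging that interchange by viewing $(T^\circ)^*$ as a weak-star continuous extension of $T$ and invoking Goldstine is the cleanest path; the sequential estimate above is the back-up, and it is the step I expect to demand the most care.
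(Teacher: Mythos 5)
Your proofs of all three parts are correct. Parts (a) and (b) follow essentially the paper's own route: for (a) the paper also invokes \cite[Proposition 3.3]{CCJM} and then runs, inline, exactly the Goldstine-net argument that constitutes the proof of Proposition~\ref{prop:NA-wQNA}, so your reduction to that proposition is the same proof more efficiently packaged (and your alternative direct argument, norming $y^*$ by a sequence in $S_Y$ instead of passing through $Y^{**}$, is a valid and slightly more elementary variant); part (b) is the same cluster-point bookkeeping as in the paper. The interesting divergence is in (c). The paper proves (c) by a self-contained sequential argument --- your ``back-up'' route, with the quantifier handling done via a diagonal choice of indices $k_n$ rather than your cleaner ``fix $m$, let $k\to\infty$'' estimate --- and this is where the separability of $Y$ is used, to extract a $w^*$-convergent subsequence of $(Tx_n)$ in $\|T\|B_{Y^*}$. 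Your primary route, passing to $(T^\circ)^*\in\NA(X^{**},Y^*)$ and pulling back through Goldstine, is correct and genuinely stronger: it proves (c) with no separability hypothesis at all. In fact your argument makes visible that the unrestricted version of (c) is formally item (a) applied to $T^\circ$ (equivalently, Proposition~\ref{prop:NA-wQNA} applied to $T^\circ$ together with \cite[Proposition 3.3]{CCJM}), since $(T^\circ)^\circ=T$; so the separability assumption in the statement is redundant. What the paper's sequential proof buys is only self-containedness (no detour through biduals and no appeal to the earlier proposition); what your route buys is the more general statement, which is worth recording.
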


\begin{proof}
(a): Let $(x_n)$ be a sequence in $S_X$ so that $(Tx_n)$ converges in norm to some $y_0^* \in \|T\| S_{Y^*}$. Note that $T^*$ attains its norm at $y_0^{**} \in S_{Y^{**}}$ such that $y_0^{**} (y_0^*) = \|T\|$ \cite[Proposition 3.3]{CCJM}. Let $(y_\alpha)$ be a net in $B_{Y^{**}}$ such that $y_\alpha \xrightarrow{w^*} y_0^{**}$. Passing to a subnet if necessary, we may assume that $T^\circ y_\alpha$ converges weak-star to some $x_0^* \in \|T^\circ \| B_{X^*}$. Note that
$$
[T^\circ y_\alpha](x_n) = [T x_n]( y_\alpha) \longrightarrow y_0^{**} (Tx_n).
$$
On the other hand, $[T^\circ y_\alpha](x_n)$ converges to $x_0^* (x_n)$, which implies that $y_0^{**} (Tx_n) = x_0^* (x_n)$ for every $n \in \N$. Since $(Tx_n)$ converges in norm to $y_0^*$, we can conclude that $\| x_0^* \| = \|T^\circ \|$. Thus, $T^\circ \in \wQNA (Y, X^*)$.

(b): Suppose that $\| T^\circ y_0 \| = \|T^\circ \|$ for some $y_0 \in S_Y$. Choose $(x_n) \subset S_X$ so that $|[T^\circ y_0] (x_n) | \longrightarrow \|T^\circ y_0\| = \|T^\circ \|$. As $(T x_n)\subset \| T \| B_{Y^*}$, there exists a subnet $(Tx_{\alpha} )$ that converges weak-star to some $u^* \in \|T \| B_{Y^*}$. Note that
$$| [T x_\alpha] (y_0) | = |[T^\circ y_0] (x_\alpha) | \longrightarrow \|T^\circ\|.
$$
On the other hand,
$\| u^* \| \geq |u^* (y_0)| = \lim_{\alpha} | [Tx_{\alpha}](y_0) | = \|T^\circ \|$. It follows that $T \in \wQNA(X, Y^*)$ and that $u^*$ attains its norm at $y_0$. Conversely, suppose that $T x_\alpha \xrightarrow{w^*} y^* \in \|T\| S_{Y^*} \cap \NA(Y, \mathbb{K})$ and $y^*$ attains its norm at $y_0\in S_Y$. We have $|[T^\circ y_0](x_\alpha)| = |[Tx_\alpha] (y_0)| \longrightarrow |y^* (y_0)| = \|T\|$ which implies that $T^\circ$ attains its norm at $y_0$.

(c): Assume that $Y$ is a separable Banach space and $T^\circ \in \QNA(Y, X^*)$. Choose a sequence $(y_n) \subset S_Y$ and $x_0^* \in \|T^\circ\| S_{X^*}$ such that $\| T^\circ y_n - x_0^* \| < 1/n$ for each $n \in \N$. For each $n \in \N$, choose $x_n \in S_X$ so that $|x_0^* (x_n)| > \| T \| - 1/n$. From the separability of $Y$, we get a subsequence $(x_{m_n})$ such that $T x_{m_n} \xrightarrow{w^*} u^*$ for some $u^* \in \|T\|B_{Y^*}$. For each $n \in \N$, pick $k_n \in \N$ such that
\[
| [ Tx_{m_{k_n}} - u^*  ] (y_n) | < \frac{1}{n}.
\]
Then
\begin{align*}
\|u^* \| \geq |u^* (y_n)| > | [T x_{m_{k_n}}]( y_n) | - \frac{1}{n} &= |[T^\circ y_n] ( x_{m_{k_n}}) | - \frac{1}{n} \\
&\geq |x_0^*( x_{m_{k_n}}) | - \frac{2}{n} \\
&> \|T\| - \frac{1}{m_{k_n}} - \frac{2}{n}.
\end{align*}
By letting $n \rightarrow \infty$, we obtain that $\|u^* \| = \|T\|$, as desired.
\end{proof}

\begin{remark}
There exist Banach spaces $X$ and $Y$, and $T \in \wQNA (X, Y^*)$ such that $T^\circ \notin \NA (Y, X^*)$. Indeed, let $X$ be an arbitrary Banach space and take $Y$ to be a non-reflexive Banach space. Pick $y_0^* \in S_{Y^*} \setminus \NA (Y, \mathbb{K})$ and $x_0^* \in S_{X^*} \cap \NA (X, \mathbb{K})$. Define $T \in \Lin (X, Y^*)$ by $Tx = x_0^* (x) y_0^*$ for every $x \in X$. It is clear that $T \in \NA (X, Y^*)$ which implies $T \in \wQNA (X, Y^*)$. Note that $T^\circ$ is given by $T^\circ (y) = y_0^* (y) x_0^*$ for every $y \in Y$, and so $T^\circ \notin \NA(Y,X^*)$.
\end{remark}

Furthermore, as mentioned just after Theorem \ref{theorem:wQNA-dense-2}, we will see in Remark \ref{rem:circle_example} that there exists an operator $T \in \wQNA( X,Y^*)$ such that $T^\circ \notin \wQNA( Y, X^*)$.

\section{Relations between the sets defined by different notions of the (quasi) norm attainment}
\label{section:relations}

The main aim in this section is to study when each of the containment relations in \eqref{eq:allcontents} in the introduction turns out to be an equality, and to obtain some consequences of our study. Our first result shows the relation with the reflexivity of domain or range spaces.

\begin{remark}\label{Remark_reflexivity}
Let $X$ and $Y$ be Banach spaces.
\begin{enumerate}
\setlength\itemsep{0.3em}
\item[\textup{(a)}] If $X$ is reflexive, then
$
\NA (X, Y^*) = \QNA (X, Y^*) = \wQNA (X, Y^*).
$
\item[\textup{(b)}] If $Y$ is reflexive, then
$
\QNA (X, Y^*) = \wQNA(X, Y^*).
$
\end{enumerate}
\end{remark}

\begin{proof}
(a): Assume that $X$ is reflexive and let $T \in \wQNA (X, Y^*)$ be given. Choose $(x_\alpha) \subset S_X$ and $u^* \in \|T \| S_{Y^*}$ such that $T x_\alpha \xrightarrow{w^*} u^*$. We may assume that $(x_\alpha)$ converges weakly to some $x_0 \in B_X$. As $T$ is weak to weak continuous, $T x_\alpha \xrightarrow{w} T x_0$. This shows that $Tx_0 = u^*$ and $\|Tx_0 \| = \|T \|$.

(b): If $Y$ is reflexive, then for $T \in \Lin (X, Y^*)$,
$
\overline{T(B_X)}^{w^*} = \overline{T(B_X)}^{w} = \overline{T (B_X)}^{\| \cdot \|}.
$
This proves the result.
\end{proof}

Let us comment that the equality in (a) of Remark \ref{Remark_reflexivity} is actually a characterization of the reflexivity of $X$ (see \cite[Proposition~4.1]{CCJM}). We also note that Remark \ref{Remark_reflexivity} shows the existence of a pair $(X, Y)$ of Banach spaces such that $\wQNA (X, Y^*)$ is not the same as the whole space $\Lin (X, Y^*)$. Indeed, we have $$\wQNA ( \ell_1, Y^*) = \QNA (\ell_1, Y^*) \neq \Lin (\ell_1 , Y^*)$$ for a reflexive infinite-dimensional space $Y$ (see \cite[Proposition 4.4]{CCJM}). For the next few results, we focus on the operators defined on $\ell_1$.

\begin{theorem}\label{thm:wQNA:range:ell_1}
If $Y$ is a separable non-reflexive Banach space which does not contain an isomorphic copy of $\ell_1$ \textup{(}in particular, if $Y^*$ is separable\textup{)}, then $\QNA (\ell_1, Y^{**}) \neq \wQNA ( \ell_1, Y^{**} )$.
\end{theorem}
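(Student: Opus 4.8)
The plan is to exhibit a single operator lying in $\wQNA(\ell_1, Y^{**})$ but not in $\QNA(\ell_1, Y^{**})$. Since $Y$ is non-reflexive, fix $y^{**} \in S_{Y^{**}} \setminus Y$, where $Y$ is identified with its canonical image in $Y^{**}$. Because $Y$ is separable and contains no isomorphic copy of $\ell_1$, the Odell--Rosenthal theorem applies: the ball $B_{Y^{**}}$ is precisely the set of weak-star limits of weak-star convergent sequences from $B_Y$; in particular there is a sequence $(y_n) \subseteq B_Y$ with $y_n \xrightarrow{w^*} y^{**}$. (If $Y^*$ is separable, then $Y$ is separable and $\ell_1$ does not embed into $Y$, so this situation is subsumed.) I would then define $T \in \Lin(\ell_1, Y^{**})$ by $Te_n = (1-1/n)\,y_n$ for each $n$, that is, $Tx = \sum_n (1-1/n)\,x_n\,y_n$ for $x = (x_n)_n \in \ell_1$. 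Since $\|Te_n\| \leq 1 - 1/n$ for all $n$, one has $\|T\| = \sup_n \|Te_n\| \leq 1$.

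First I would check that $T \in \wQNA(\ell_1, Y^{**})$. For each $f \in Y^*$ we have $\langle Te_n, f\rangle = (1-1/n)\,f(y_n) \longrightarrow \langle y^{**}, f\rangle$, so $Te_n \xrightarrow{w^*} y^{**}$ in $(Y^{**}, \sigma(Y^{**}, Y^*))$. As $T(B_{\ell_1}) \subseteq \|T\| B_{Y^{**}}$ and $\|T\| B_{Y^{**}}$ is weak-star closed, it follows that $\|y^{**}\| \leq \|T\| \leq 1 = \|y^{**}\|$, whence $\|T\| = 1$ and $y^{**} \in \overline{T(B_{\ell_1})}^{w^*} \cap \|T\| S_{Y^{**}}$, so indeed $T \in \wQNA(\ell_1, Y^{**})$.

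The heart of the proof is to show $T \notin \QNA(\ell_1, Y^{**})$, i.e.\ that $\overline{T(B_{\ell_1})}^{\|\cdot\|}$ contains no vector of norm $1$. The argument is by contradiction: suppose $\|Tx_k - z^{**}\| \to 0$ for some $(x_k) \subseteq B_{\ell_1}$ and some $z^{**}$ with $\|z^{**}\| = 1$, and write $x_k = (a_n^{(k)})_n$. Each $Tx_k = \sum_n (1-1/n)\,a_n^{(k)}\,y_n$ is an absolutely convergent series in the closed subspace $Y$ of $Y^{**}$, so $z^{**} \in Y$. From $\|Tx_k\| \leq \sum_n |a_n^{(k)}|(1-1/n) \leq 1 - \sum_n |a_n^{(k)}|/n$ together with $\|Tx_k\| \to 1$ we get $\sum_n |a_n^{(k)}|/n \to 0$, and hence $\sum_{n \leq N} |a_n^{(k)}| \to 0$ as $k \to \infty$ for each fixed $N$ (the mass of $x_k$ escapes to infinity). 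Passing to a subsequence, we may assume the scalars $\mu_k := \sum_n (1-1/n)\,a_n^{(k)}$ converge to some $\mu$ with $|\mu| \leq 1$. Then, fixing $f \in Y^*$ and $\delta > 0$, choosing $M$ with $|f(y_n) - \langle y^{**}, f\rangle| < \delta$ for $n > M$, and splitting the series $\langle Tx_k, f\rangle - \mu_k \langle y^{**}, f\rangle = \sum_n (1-1/n)\,a_n^{(k)}(f(y_n) - \langle y^{**}, f\rangle)$ into the terms with $n \leq M$ (of total modulus at most $2\|f\|\sum_{n\leq M}|a_n^{(k)}| \to 0$) and those with $n > M$ (of total modulus at most $\delta$), I obtain, after letting $k \to \infty$ and then $\delta \to 0$, that $\langle z^{**}, f\rangle = \mu\,\langle y^{**}, f\rangle$ for every $f \in Y^*$, i.e.\ $z^{**} = \mu\,y^{**}$ in $Y^{**}$. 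Since $z^{**} \in Y$ whereas $y^{**} \notin Y$, this forces $\mu = 0$, so $z^{**} = 0$, contradicting $\|z^{**}\| = 1$. This completes the proof.

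The main obstacle is concentrated in the two places where the hypotheses on $Y$ are used: producing the bounded sequence $(y_n) \subseteq B_Y$ converging weak-star to a norm-one point of $Y^{**} \setminus Y$ — for which separability and the absence of $\ell_1$ are essential (via Odell--Rosenthal), and which genuinely fails, e.g., for $Y = \ell_1$ itself — and the elementary "mass escapes to infinity" estimate, which is exactly what pins any hypothetical norm-limit $z^{**}$ down to a scalar multiple of $y^{**}$ and hence keeps it outside $Y$.
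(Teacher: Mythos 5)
Your proposal is correct and follows essentially the same route as the paper: the same operator $Te_n=\frac{n}{n+1}\,y_n$ (up to the harmless replacement of $\frac{n}{n+1}$ by $1-\frac{1}{n}$) built from an Odell--Rosenthal sequence $y_n\xrightarrow{w^*}y^{**}\in S_{Y^{**}}\setminus Y$, the same observation that near-optimality of $\|Tx_k\|$ forces the $\ell_1$-mass of the $x_k$ to escape to infinity, and the same final contradiction that any norm limit of $(Tx_k)$ would have to be a scalar multiple of $y^{**}\notin Y$ while lying in the norm-closed subspace $Y$. The only difference is organizational: where the paper passes to finitely supported, disjointly blocked vectors $w^{(k)}$ and tests against a functional norming the limit, you compute the weak-star limit of $Tx_k$ directly by splitting the series at a fixed index $M$ and using $\sum_{n\le M}|a_n^{(k)}|\to 0$, which is a slightly cleaner way to carry out the same estimate.
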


\begin{proof}
Pick any norm one vector $y_0^{**} \in Y^{**} \setminus Y$. Since $Y$ is separable and contains no copy of $\ell_1$, each element of $B_{Y^{**}}$ is the weak-star limit of a sequence from $B_Y$ (see \cite[p.~215]{Diestel_seq_series}). Choose a sequence $(y_n)_{n\in\N} \subset B_{Y}$ such that $y_n \xrightarrow{w^*} y_0^{**}$. Let us define $T \in \Lin (\ell_1, Y^{**})$ by $Te_n = \frac{n}{n+1} y_n$ for each $n \in \N$, where $(e_n)_{n \in \N}$ denotes the canonical basis of $\ell_1$. Then, we have $\|T \| = 1$ and $Te_n \xrightarrow{w^*} y_0^{**}$ which shows $T \in \wQNA (\ell_1 , Y^{**})$.

Assume now that $T \in \QNA (\ell_1, Y^{**})$. As $T(\ell_1) \subset Y$, there exist a sequence $(x^{(n)}) \subset S_{\ell_1}$ and a vector $u \in S_Y$ such that $Tx^{(n)}\xrightarrow{\|\cdot\|} u$. Take $(z^{(n)}) \subset S_{\ell_1}$ so that $z^{(n)} = (z_j^{(n)})_{j \in \N}$ has a finite support for each $n \in \N$ and $\left\| z^{(n)} - x^{(n)} \right\| \xrightarrow[n \rightarrow \infty]{} 0$. We may assume that there exists an increasing  sequence of natural numbers $(m_n)$ such that $z_i^{(n)} = 0$ for every $i > m_n$.
Since $Tz^{(n)} \xrightarrow{\|\cdot\|} u$, we have
\begin{align*}
1 = \| u \| = \lim_{n \rightarrow \infty} \left\| T z^{(n)} \right\| &= \lim_{n \rightarrow \infty} \left\| \sum_{j=1}^{\infty} \frac{j}{j+1} z_j^{(n)} y_j \right\| \\
&\leq \lim_{n\rightarrow \infty} \sum_{j=1}^\infty \frac{j}{j+1} \left|z_j^{(n)}\right| \leq  \lim_{n\rightarrow \infty} \sum_{j=1}^\infty  \left|z_j^{(n)}\right| = 1.
\end{align*}
This implies that
\[
\lim_{n\rightarrow\infty} \sum_{j=1}^{\infty} \frac{1}{j+1} \left|z_j^{(n)}\right| = 0;
\]
hence $z_j^{(n)} \xrightarrow[n \rightarrow \infty]{} 0$ for each $j \in \N$.
Let $w^{(1)} := z^{(1)}$ and $n_1 :=1$. Choose $n_2 \in \N$ so that $\left|z_j^{(n_2)}\right| < \frac{1}{(m_{n_1})^2}$ for all $1 \leq j \leq m_{n_1}$, and define
\[
w^{(2)} := (\underbrace{0 , \ldots \ldots, 0}_{m_{n_1}\text{ many terms}}, z_{m_1+1}^{(n_2)}, \ldots, z_{m_{n_2}}^{(n_2)}, 0, 0, \ldots).
\]
Next, choose $n_3 \in \N$ so that $n_3 > n_2$ and $|z_j^{(n_3)}| < \frac{1}{(m_{n_2})^2}$ for all $1 \leq j \leq m_{n_2}$ and define
\[
w^{(3)} := (\underbrace{0 , \ldots \ldots, 0}_{m_{n_2}\text{ many terms}}, z_{m_{n_2}+1}^{(n_3)}, \ldots, z_{m_{n_3}}^{(n_3)}, 0, 0, \ldots).
\]
In this fashion, we can define vectors $w^{(k)}$ for each $k \geq 2$ and an increasing sequence of natural numbers $(n_k)_{k \geq 2}$ as
\[
w^{(k)} := (\underbrace{0 , \ldots \ldots, 0}_{m_{n_{k-1}}\text{ many terms}}, z_{m_{n_{k-1}}+1}^{(n_{k})}, \ldots, z_{m_{n_{k}}}^{(n_{k})}, 0, 0, \ldots)
\]
satisfying $|z_j^{(n_{k})}| < \frac{1}{(m_{n_{k-1}})^2}$ for $1 \leq j \leq m_{n_{k-1}}$. Note that
\[
\| w^{(k)} - z^{(n_k)} \| = |z_1^{(n_k)}| + \cdots + |z_{m_{n_{k-1}}}^{(n_k)}| <
\underbrace{\frac{1}{(m_{n_{k-1}})^2} + \cdots + \frac{1}{(m_{n_{k-1}})^2}}_{m_{n_{k-1}}\text{-many terms}} = \frac{1}{m_{n_{k-1}}} \xrightarrow[k \rightarrow \infty]{} 0;
\]
hence $Tw^{(k)} \xrightarrow{\| \cdot \|} u$ as $k \rightarrow \infty$.

On the other hand, take $u^* \in S_{Y^*}$ so that $u^* (u) = 1$. Then
\begin{align*}
u^* ( T w^{(k)} ) = u^* \left( \sum_{j=m_{n_{k-1}} + 1}^{m_{n_k}} \frac{j}{j+1}  z_j^{(n_k)} y_j \right) = \sum_{j=m_{n_{k-1}} + 1}^{m_{n_k}} \frac{j}{j+1}  z_j^{(n_k)} u^*( y_j )
 \xrightarrow[k \rightarrow \infty]{} 1.
\end{align*}
As $y_j \xrightarrow{w^*} y_0^{**}$, we get that
\[
\sup_{ m_{n_{k-1}} + 1 \leq j \leq m_{n_k} } |u^* (y_j ) - y_0^{**} (u^*)| \xrightarrow[k \rightarrow \infty]{} 0,
\]
which implies that
\[
y_0^{**} (u^*) \sum_{j=m_{n_{k-1}} + 1}^{m_{n_k}} \frac{j}{j+1}  z_j^{(n_k)}
 \xrightarrow[k \rightarrow \infty]{} 1.
\]
Thus, $|y_0^{**} (u^*)| = 1$ and
\[
\left| \sum_{j=m_{n_{k-1}} + 1}^{m_{n_k}} \frac{j}{j+1}  z_j^{(n_k)} \right|
 \xrightarrow[k \rightarrow \infty]{} 1.
\]
Put $\lambda = \lim_{k \rightarrow \infty} \sum_{j=m_{n_{k-1}} + 1}^{m_{n_k}} \frac{j}{j+1}  z_j^{(n_k)} \in S_{\C}$.
For given $y^* \in Y^*$, observe that
\begin{align*}
y^* ( T w^{(k)}) = \sum_{j=m_{n_{k-1}} + 1}^{m_{n_k}} \frac{j}{j+1}  z_j^{(n_k)} y^*( y_j ) \xrightarrow[k \rightarrow \infty]{} \lambda y_0^{**} (y^*).
\end{align*}
This means that $T w^{(k)} \xrightarrow{w^*} \lambda y_0^{**}$, so $\lambda y_0^{**}$ must be equal to $u$, which is a contradiction.
\end{proof}

\begin{remark}\label{rem:adj_NA}
Define $T \in \mathcal{L} (\ell_1, c_0^{**})$ by $Te_n = \frac{n}{n+1} y_n$ where
\[
y_n = (\underbrace{1 , \ldots \ldots, 1}_{n \text{ many terms}}, 0,0,\ldots) \in B_{c_0}
\]
for each $n \in \N$. Then the argument in the proof of Theorem \ref{thm:wQNA:range:ell_1} shows that $T \in \wQNA (\ell_1, c_0^{**})$ but $T \notin \QNA (\ell_1, c_0^{**})$. Besides, we have that $T^* \in \NA(c_0^{***}, \ell_1^*)$. Indeed, let $y^* = (a_n) \in S_{\ell_1}$ with $a_n \geq 0 $ for each $n \in \N$. Then \[
\|T^* y^* \| \geq \sup_{n \in \N} \sum_{k=1}^n \frac{n}{n+1} a_k = 1 = \|T^*\|.
\]
\end{remark}

We may read Theorem~\ref{thm:wQNA:range:ell_1} alternatively as follows: if $Y$ is a separable Banach space such that $\QNA (\ell_1, Y^{**}) = \wQNA (\ell_1, Y^{**})$, then either $Y$ is reflexive or $Y$ contains a copy of $\ell_1$. Conversely, if $Y$ is reflexive, then clearly $\QNA (\ell_1, Y^{**}) = \wQNA (\ell_1, Y^{**})$ (use Corollary~\ref{weakly_compact_QNA_wQNA} or Remark~\ref{Remark_reflexivity}). However, we see that  $Y=\ell_1$ is a separable space such that $\QNA (\ell_1, Y^{**}) \neq \wQNA (\ell_1, Y^{**})$.

\begin{example}\label{counterexample_sep_Y}
There is an operator $T \in \wQNA (\ell_1, \ell_1^{**})$ such that $T \notin \QNA (\ell_1, \ell_1^{**})$, i.e., $\QNA (\ell_1, \ell_1^{**}) \neq \wQNA (\ell_1, \ell_1^{**})$.
\end{example}

\begin{proof}
Pick a sequence $(\alpha_n) \subset (0,1)$ with $\alpha_n \rightarrow 1$. Define $T \in \Lin (\ell_1, \ell_1^{**})$ as $Tx = (\alpha_n x_n)_{n\in\N}\in\ell_1\subset \ell_1^{**}$ for every $x=(x_n)_{n\in\N} \in \ell_1$. Note that $\|T\|=1$ and $T$ does not attain its norm. First, we claim that $T \notin \QNA(\ell_1, \ell_1^{**})$. If this is not the case, there exist a sequence $(x^{(n)})_{n\in\N} \subset B_{\ell_1}$ and an element $\varphi \in S_{\ell_1^{**}}$ such that
$Tx^{(n)} \xrightarrow{\|\cdot\|} \varphi$. As $T(\ell_1) \subset \ell_1$, we see that $\varphi \in \ell_1$. Note that there exists a subnet $(x^{(\beta)} )_{\beta \in \Lambda}$ of $(x^{(n)})_{n\in\N}$ such that $x^{(\beta)} \xrightarrow{w^*} x^{(\infty)}$ for some $x^{(\infty)}\in B_{\ell_1}$ and $T$ is $w^*$-$w^*$-continuous (when $T$ is considered as an element of $\Lin (\ell_1, \ell_1)$). It follows that $T x^{(\infty)} = \varphi$, so $T$ attains its norm at $x^{(\infty)}$. This is a contradiction.

Next, we show that $T \in \wQNA (\ell_1, \ell_1^{**})$. To see this, it is enough to prove that $T^\circ \in \NA (\ell_\infty, \ell_\infty)$ (see Proposition \ref{prop:NA_or_QNA_to_wQNA}). Note that $\| T^\circ (1, 1, \ldots) \| = \| ( \alpha_1, \alpha_2, \ldots ) \| = 1$; hence $T^\circ$ attains its norm at $(1, 1, \ldots)$.
\end{proof}

\begin{remark}
If we view the operator $T$ in the proof of Example \ref{counterexample_sep_Y} as an element in $\Lin (\ell_1, c_0^*)$, then we obtain $T \in \Lin (\ell_1, c_0^*)$ such that $T^* \in \NA ( c_0^{**}, \ell_1^*)$ while $T \notin \wQNA (\ell_1, c_0^*)$.
\end{remark}

We next present some more results dealing with the equality $\QNA=\wQNA$. Recall that a Banach space $X$ is said to have the \emph{Grothendieck property} if for every sequence $(x_n^*) \subset X^*$, $x_n^* \xrightarrow{w^*} 0$ implies $x_n^* \xrightarrow{w} 0$. It is clear that if a separable Banach space $Y$ has the Grothendieck property, then $\QNA (X, Y^*) = \wQNA (X, Y^*)$. However, this is already covered by Remark \ref{Remark_reflexivity} as every separable Banach space with the Grothendieck property is reflexive. Recall also that a Banach space $X$ is called \emph{weakly compactly generated \textup{(}WCG \textup{in short)}} if there is a weakly compact set $K$ in $X$ such that $X = \overline{\text{span}} (K)$.

\begin{prop}\label{Grothendieck_WCG}
Let $X$ and $Y$ be Banach spaces. If $X$ has the Grothendieck property and $Y^*$ is a WCG space, then $\QNA (X, Y^*) = \wQNA (X, Y^*)$.
\end{prop}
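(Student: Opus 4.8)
The plan is to establish the stronger fact that, under these two hypotheses, \emph{every} operator in $\Lin(X,Y^*)$ is weakly compact; granting this, the statement follows immediately, since for $T\in\wQNA(X,Y^*)\subseteq\W(X,Y^*)$ Corollary~\ref{weakly_compact_QNA_wQNA} gives $T\in\QNA(X,Y^*)$, while the reverse inclusion $\QNA(X,Y^*)\subseteq\wQNA(X,Y^*)$ is always available by \eqref{eq:allcontents}.

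So the core of the argument is the claim that if $X$ has the Grothendieck property and $Z$ is a WCG Banach space, then $\Lin(X,Z)=\W(X,Z)$. I would prove this as follows. Recall the classical fact that, since $Z$ is WCG, the ball $B_{Z^*}$ equipped with its weak-star topology is an Eberlein compact space, and is therefore sequentially compact. Now fix $S\in\Lin(X,Z)$ and consider its adjoint $S^*\colon Z^*\to X^*$, which is weak-star-to-weak-star continuous. Given any sequence $(\varphi_n)\subset B_{Z^*}$, pass to a subsequence with $\varphi_{n_k}\xrightarrow{w^*}\varphi$ for some $\varphi\in B_{Z^*}$; then $S^*\varphi_{n_k}-S^*\varphi=S^*(\varphi_{n_k}-\varphi)\xrightarrow{w^*}0$ in $X^*$, and the Grothendieck property of $X$ upgrades this to $S^*\varphi_{n_k}\xrightarrow{w}S^*\varphi$. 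Hence every sequence in $S^*(B_{Z^*})$ has a weakly convergent subsequence, so by the Eberlein--\v{S}mulian theorem $S^*(B_{Z^*})$ is relatively weakly compact; thus $S^*$, and therefore $S$ (by Gantmacher's theorem), is weakly compact. Applying this with $Z=Y^*$ gives $\Lin(X,Y^*)=\W(X,Y^*)$ and finishes the proof.

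The step I expect to be the main obstacle --- or at least the only non-elementary ingredient --- is the passage from ``$Z$ is WCG'' to ``$(B_{Z^*},w^*)$ is sequentially compact'', i.e., invoking that the dual ball of a WCG space is an Eberlein compact and that Eberlein compacta are sequentially compact. Once this is in hand, the remainder is a routine diagram chase combining the Grothendieck property with Gantmacher's theorem, and the reduction to Corollary~\ref{weakly_compact_QNA_wQNA} is immediate.
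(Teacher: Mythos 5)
Your proposal is correct and follows essentially the same route as the paper: both reduce the statement to the fact that every operator in $\Lin(X,Y^*)$ is weakly compact when $X$ is Grothendieck and $Y^*$ is WCG, and then invoke Corollary~\ref{weakly_compact_QNA_wQNA} together with the trivial inclusion $\QNA\subseteq\wQNA$. The only difference is that the paper simply cites this weak-compactness fact (as \cite[Exercise 13.33]{FHHMZ}), whereas you supply a correct self-contained proof of it via the Amir--Lindenstrauss theorem, sequential compactness of Eberlein compacta, the Eberlein--\v{S}mulian theorem and Gantmacher's theorem.
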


\begin{proof}
It is known that if $X$ has the Grothendieck property and $Y^*$ is a WCG space, then $\Lin (X, Y^*) = \W (X, Y^*)$ (see, for instance, \cite[Exercise 13.33]{FHHMZ}). The result then follows from Corollary \ref{weakly_compact_QNA_wQNA}.
\end{proof}

It is well known that every separable Banach space is WCG. Thus, Proposition \ref{Grothendieck_WCG} in particular says that if $Y$ is a Banach space with $Y^*$ separable, then $\QNA (\ell_\infty, Y^*) = \wQNA (\ell_\infty, Y^*)$. Given a Banach space $X$, recall that its dual $X^*$ is said to have the {\it$w^*$-Kadec-Klee property} if the weak-star and norm topologies coincide on $S_{X^*}$.

\begin{prop}\label{wKK_QNA_wQNA}
Let $X$ be a Banach space. If $Y$ is a Banach space with $Y^*$ having the $w^*$-Kadec-Klee property, then $\QNA (X, Y^*) = \wQNA (X, Y^*)$.
\end{prop}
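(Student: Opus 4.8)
The plan is to prove the only nontrivial inclusion $\wQNA(X,Y^*)\subseteq\QNA(X,Y^*)$; indeed I would show the slightly stronger fact that, for $T\in\wQNA(X,Y^*)$, every net $(x_\alpha)\subset B_X$ witnessing the weak-star quasi norm attainment already satisfies that $(Tx_\alpha)$ converges in norm. So normalise to $\|T\|=1$, fix such a net, and let $u^*\in S_{Y^*}$ be the vector with $Tx_\alpha\xrightarrow{w^*}u^*$.

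The first step is to observe that $\|Tx_\alpha\|\to 1$: on the one hand $\|Tx_\alpha\|\leq 1$ for all $\alpha$, and on the other hand the weak-star lower semicontinuity of the norm on $Y^*$ gives $1=\|u^*\|\leq\liminf_\alpha\|Tx_\alpha\|$. In particular $Tx_\alpha\neq 0$ eventually, so the renormalised net $v_\alpha^*:=Tx_\alpha/\|Tx_\alpha\|\in S_{Y^*}$ is eventually well defined, and because the scalar factors $\|Tx_\alpha\|^{-1}$ tend to $1$ we still have $v_\alpha^*\xrightarrow{w^*}u^*$.

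Now the $w^*$-Kadec-Klee property applies: the vectors $v_\alpha^*$ and $u^*$ all lie on $S_{Y^*}$ and $v_\alpha^*\xrightarrow{w^*}u^*$, hence $\|v_\alpha^*-u^*\|\to 0$. Combining this with the first step, $\|Tx_\alpha-u^*\|\leq|\,\|Tx_\alpha\|-1\,|+\|v_\alpha^*-u^*\|\to 0$, so $Tx_\alpha\to u^*$ in norm. Since the $Tx_\alpha$ lie in $T(B_X)$ and $\|u^*\|=1=\|T\|$, this shows $u^*\in\overline{T(B_X)}^{\,\|\cdot\|}\cap\|T\|S_{Y^*}$, and as norm closure in the metric space $Y^*$ is sequential we may extract from $T(B_X)$ an honest norm-convergent sequence with limit $u^*$; therefore $T\in\QNA(X,Y^*)$. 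There is essentially no obstacle in this argument beyond one bookkeeping point: since the $w^*$-Kadec-Klee property is stated on the unit sphere, one must first push the net $(Tx_\alpha)$ onto $S_{Y^*}$ before invoking it, which is legitimate precisely because $\|Tx_\alpha\|\to\|T\|$.
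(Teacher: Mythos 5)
Your proof is correct and follows essentially the same route as the paper: push the net onto $S_{Y^*}$ by normalising and then invoke the $w^*$-Kadec--Klee property (the paper isolates this step as a separate lemma about extracting a norm-convergent subnet). Your observation that weak-star lower semicontinuity of the norm already forces $\|Tx_\alpha\|\to\|T\|$ along the \emph{whole} net is a minor simplification of the paper's compactness-and-subnet argument, but the underlying mechanism is identical.
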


In order to prove Proposition \ref{wKK_QNA_wQNA}, we will need the following result on the $w^*$-Kadec-Klee property which is surely known, but for which we have not found a reference.

\begin{lemma}\label{wKK_subnet}
Let $Y$ be a Banach space with $Y^*$ having the $w^*$-Kadec-Klee property.
If a net $(y_\alpha^*) \subset B_{Y^*}$ converges weak-star to $u^* \in S_{Y^*}$, then there exists a subnet $(y_\beta^*)$ which converges to $u^*$ in norm.
\end{lemma}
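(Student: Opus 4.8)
The plan is to reduce the statement to the defining property of $Y^*$ by first pushing the net onto the sphere $S_{Y^*}$. The key preliminary observation is that, although $(y_\alpha^*)$ lives in all of $B_{Y^*}$, its norms must converge to $1$: since the dual norm is weak-star lower semicontinuous (it is the supremum of the weak-star continuous functions $y^* \mapsto |y^*(y)|$ over $y \in B_Y$), we get $1 = \|u^*\| \leq \liminf_\alpha \|y_\alpha^*\|$, while $\|y_\alpha^*\| \leq 1$ for every $\alpha$; hence $\lim_\alpha \|y_\alpha^*\| = 1$.

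Next I would normalize. Because $\|y_\alpha^*\| \to 1$, there is an index $\alpha_0$ with $\|y_\alpha^*\| \geq 1/2 > 0$ for all $\alpha \geq \alpha_0$; passing to this tail --- which is a subnet --- I may assume $y_\alpha^* \neq 0$ for every $\alpha$ and set $z_\alpha^* := y_\alpha^*/\|y_\alpha^*\| \in S_{Y^*}$. For each fixed $y \in Y$ we have $z_\alpha^*(y) = y_\alpha^*(y)/\|y_\alpha^*\| \to u^*(y)$, so $z_\alpha^* \xrightarrow{w^*} u^*$ with $(z_\alpha^*) \subset S_{Y^*}$ and $u^* \in S_{Y^*}$. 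Now the $w^*$-Kadec-Klee property of $Y^*$ applies verbatim and yields $z_\alpha^* \to u^*$ in norm.

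Finally I would transfer this back to the original net via the triangle inequality: $\|y_\alpha^* - u^*\| \leq \|y_\alpha^* - z_\alpha^*\| + \|z_\alpha^* - u^*\| = \bigl|\,\|y_\alpha^*\| - 1\,\bigr| + \|z_\alpha^* - u^*\| \to 0$, so this subnet of $(y_\alpha^*)$ converges to $u^*$ in norm, as desired. (In fact the whole net then converges in norm, a net converging as soon as one of its tails does; the statement only asserts the weaker subnet version.)

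There is no substantial obstacle here. The only point needing care is the division by $\|y_\alpha^*\|$ --- which is precisely why one passes to a subnet --- and once the norm convergence $\|y_\alpha^*\| \to 1$ is in hand, everything else is a routine continuity argument. One should just note that the $w^*$-Kadec-Klee property is a statement about the coincidence of two topologies on $S_{Y^*}$, hence about net convergence, so its application to the net $(z_\alpha^*)$ is legitimate; this is exactly why the lemma is phrased (and will be used) for nets rather than only for sequences.
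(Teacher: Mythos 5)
Your proof is correct and follows essentially the same route as the paper: normalize the net onto $S_{Y^*}$ and apply the $w^*$-Kadec-Klee property, then transfer back. The only (harmless) difference is that you obtain $\|y_\alpha^*\|\to 1$ for the whole net directly from weak-star lower semicontinuity of the dual norm, whereas the paper first extracts, by compactness of $[0,1]$, a subnet along which the norms converge and then argues that the limit must be $1$; your variant even gives norm convergence of the entire (tail of the) net, which is slightly more than the lemma asserts.
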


\begin{proof}
Observe that if $\|y_\alpha^*\|$ is convergent, then $\| y_\alpha^* \|$ must converge to $1$. Indeed, if $\| y_\alpha^* \| \rightarrow \lambda < 1$, then
\[
|u^* (y) | = \lim_\alpha |y_\alpha^* (y)| \leq \lim_\alpha \| y_\alpha^* \| = \lambda
\]
for each $y \in B_Y$. Hence we have the contradiction that $\|u^* \| \leq \lambda < 1$. By compactness, we may choose a subnet $(y_\beta^*)$ such that $\| y_\beta^*\|$ is convergent. From the previous observation, $\|y_\beta^*\|$ converges to $1$.
Thus, $(\|y_\beta^*\|^{-1} y_\beta^*)$ converges weak-star to $u^*$ in $S_{Y^*}$. By the $w^*$-Kadec-Klee property of $Y^*$, $\|y_\beta^*\|^{-1} y_\beta^*$ tends to $u^*$ in norm. So, we conclude that $y_\beta^*$ converges to $u^*$ in norm.
\end{proof}

\begin{proof}[Proof of Proposition~\ref{wKK_QNA_wQNA}]
Let $T \in \wQNA (X, Y^*)$ with $\|T\| =1$ be given. Choose a net $(x_\alpha) \subset S_X$ and $y^* \in S_Y^*$ such that $T x_\alpha \xrightarrow{w^*} y^*$. By Lemma \ref{wKK_subnet}, there exists a subnet $(x_\beta) \subset S_X$ such that $T x_\beta \xrightarrow{\|\cdot\|} y^*$. It follows that $T \in \QNA (X, Y^*)$.
\end{proof}

Note that if $Y^*$ is locally uniformly rotund, then $Y^*$ has the $w^*$-Kadec-Klee property (see, for instance, \cite[Theorem 5.3.7]{M}). Moreover, it is known that $\ell_1 = c_0^*$ has the $w^*$-Kadec-Klee property \cite[Exercise 5.46]{FHHMZ}, which leads to the following consequence.

\begin{cor}\label{corollary:LURorc0}
Let $X$ be a Banach space. If $Y$ is a Banach space such that $Y^*$ is locally uniformly rotund or $Y= c_0$, then $\QNA (X , Y^*) = \wQNA (X, Y^*)$.
\end{cor}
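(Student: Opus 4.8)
The plan is to reduce the whole statement to Proposition~\ref{wKK_QNA_wQNA}: once we know that $Y^*$ enjoys the $w^*$-Kadec-Klee property, that proposition immediately yields $\QNA(X,Y^*)=\wQNA(X,Y^*)$ for every Banach space $X$. So the entire task is to verify, in each of the two cases in the statement, that $Y^*$ has the $w^*$-Kadec-Klee property, and then simply invoke Proposition~\ref{wKK_QNA_wQNA}.

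For the first case, if $Y^*$ is locally uniformly rotund, then the coincidence of the weak-star and norm topologies on $S_{Y^*}$ is a classical fact from renorming theory, and I would just cite it (e.g.\ \cite[Theorem 5.3.7]{M}). If one prefers a self-contained sketch: given a net $(y_\alpha^*)\subset S_{Y^*}$ converging weak-star to $u^*\in S_{Y^*}$, the net $\tfrac12(y_\alpha^*+u^*)$ also converges weak-star to $u^*$, so weak-star lower semicontinuity of the dual norm gives $\|\tfrac12(y_\alpha^*+u^*)\|\to 1$, and local uniform rotundity then forces $\|y_\alpha^*-u^*\|\to 0$. Hence $Y^*$ has the $w^*$-Kadec-Klee property, and Proposition~\ref{wKK_QNA_wQNA} applies.

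For the second case, $Y=c_0$, we have $Y^*=\ell_1$ with the relevant weak-star topology being $\sigma(\ell_1,c_0)$, i.e.\ coordinatewise convergence on bounded sets. The $w^*$-Kadec-Klee property of $\ell_1$ in this sense is again well known \cite[Exercise 5.46]{FHHMZ}: if $(a^{(\alpha)})\subset S_{\ell_1}$ converges weak-star to $b\in S_{\ell_1}$, it converges to $b$ coordinatewise, and splitting the $\ell_1$-sum into a finite initial block (handled by coordinatewise convergence) plus a tail (whose mass is controlled since $\|a^{(\alpha)}\|=\|b\|=1$) shows $\|a^{(\alpha)}-b\|\to 0$. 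With this in hand, Proposition~\ref{wKK_QNA_wQNA} again finishes the argument.

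There is no serious obstacle here; the corollary is essentially a packaging of Proposition~\ref{wKK_QNA_wQNA} together with two standard instances of the $w^*$-Kadec-Klee property. The only points deserving care are that the $c_0$ case is genuinely separate (since $\ell_1$ is not locally uniformly rotund, it is not subsumed by the first case) and that the weak-star topology in play when we write $Y^*$ with $Y=c_0$ is precisely the one induced by $c_0$, which is the predual convention used throughout.
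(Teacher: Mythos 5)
Your proposal is correct and follows exactly the paper's route: both cases are reduced to Proposition~\ref{wKK_QNA_wQNA} by observing that a locally uniformly rotund dual norm has the $w^*$-Kadec-Klee property (\cite[Theorem 5.3.7]{M}) and that $\ell_1=c_0^*$ has it as well (\cite[Exercise 5.46]{FHHMZ}). The two self-contained sketches you add (the LUR argument via $w^*$-lower semicontinuity of the dual norm, and the block-splitting argument in $\ell_1$) are both sound, so there is nothing to correct.
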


Our last result deals with the equality between $\wQNA(X,Y^*)$ and $\Lin(X,Y^*)$.

\begin{theorem}\label{Y_sep_wQNA_ell_1_Y*}
Let $Y$ be a separable Banach space. If $\wQNA (\ell_1, Y^*) = \Lin (\ell_1, Y^*)$, then $Y$ is finite dimensional.
\end{theorem}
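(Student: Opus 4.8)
The plan is to prove the contrapositive: if $Y$ is separable and \emph{infinite}-dimensional, I will produce $T\in\Lin(\ell_1,Y^*)$ with $T\notin\wQNA(\ell_1,Y^*)$. The key idea is to arrange for $T$ to be an \emph{adjoint} operator. Since $\ell_1=c_0^*$ isometrically, the remark on adjoint operators in Section~\ref{section:some-remarks} says that for $T$ of the form $R^*$ with $R\in\Lin(Y,c_0)$ one has $T\in\wQNA(\ell_1,Y^*)\iff T\in\NA(\ell_1,Y^*)$; so it suffices to exhibit such an $R$ whose adjoint fails to attain its norm.

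The first step is to produce a normalized weak-star null sequence in $Y^*$. Because $Y$ is infinite-dimensional, $S_{Y^*}$ is weak-star dense in $B_{Y^*}$: given $f\in B_{Y^*}$ and $y_1,\dots,y_k\in Y$, the annihilator $(\spann\{y_1,\dots,y_k\})^{\perp}$ has finite codimension in the infinite-dimensional space $Y^*$, hence contains some $h\neq 0$; since $\|f+th\|\to\infty$ as $|t|\to\infty$, there is $t_0$ with $\|f+t_0h\|=1$, and $(f+t_0h)(y_i)=f(y_i)$ for all $i$, so $f$ lies in the weak-star closure of $S_{Y^*}$. As $Y$ is separable, $(B_{Y^*},w^*)$ is metrizable, so there is a sequence $(y_n^*)\subset S_{Y^*}$ with $y_n^*\xrightarrow{w^*}0$. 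Now fix $\alpha_n\in(0,1)$ with $\alpha_n\uparrow 1$ and define $R\in\Lin(Y,c_0)$ by $Ry=(\alpha_n y_n^*(y))_n$; this is well defined ($y_n^*(y)\to0$) and $\|R\|\leq 1$. Setting $T:=R^*\in\Lin(\ell_1,Y^*)$, one computes $Te_n=\alpha_n y_n^*$, so $\|T\|=1$ (it is $\geq\|Te_n\|=\alpha_n\to1$ and $\leq\|R\|\leq1$).

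It remains to verify that $T$ does not attain its norm, the only genuinely computational point. For $x=(a_n)\in B_{\ell_1}$ we have $Tx=\sum_n a_n\alpha_n y_n^*$, hence
\[
\|Tx\|\leq\sum_n|a_n|\alpha_n=\sum_n|a_n|-\sum_n|a_n|(1-\alpha_n);
\]
if $x\neq0$, say $a_{n_0}\neq0$, the subtracted sum is at least $|a_{n_0}|(1-\alpha_{n_0})>0$, so $\|Tx\|<\sum_n|a_n|\leq1=\|T\|$ (and $\|T0\|=0<\|T\|$ trivially). Thus $T\notin\NA(\ell_1,Y^*)$, and by the adjoint remark $T\notin\wQNA(\ell_1,Y^*)$, contradicting the hypothesis; hence $Y$ must be finite-dimensional. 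Equivalently, bypassing that remark: $T=R^*$ is weak-star-to-weak-star continuous and $B_{\ell_1}=B_{c_0^*}$ is weak-star compact, so $\overline{T(B_{\ell_1})}^{w^*}=T(B_{\ell_1})$, which the displayed estimate shows is disjoint from $\|T\|S_{Y^*}$.

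I expect the main (though still modest) obstacle to be the construction of the normalized weak-star null sequence in $S_{Y^*}$: this is precisely where separability of $Y$ enters, converting weak-star density of $S_{Y^*}$ in $B_{Y^*}$ into an honest \emph{sequence} converging weak-star to $0$, and it is what forces the separability hypothesis. Everything else reduces to the elementary strict-inequality estimate above and the observation that the operator is an adjoint.
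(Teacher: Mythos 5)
Your proof is correct, and it takes a genuinely different (and in fact shorter) route than the paper's. Both arguments start from a normalized weak-star null sequence $(y_n^*)\subset S_{Y^*}$ and the operator $Te_n=\alpha_n y_n^*$ with $\alpha_n\uparrow 1$, but they diverge twice. First, the paper obtains $(y_n^*)$ from the Josefson--Nissenzweig theorem, whereas you build it by hand from the weak-star density of $S_{Y^*}$ in $B_{Y^*}$ plus the metrizability of $(B_{Y^*},w^*)$; this elementary substitute is available precisely because $Y$ is separable, and it makes the separability hypothesis do visible work. Second, and more importantly, to show $T\notin\wQNA(\ell_1,Y^*)$ the paper takes an arbitrary sequence witnessing weak-star quasi norm attainment and runs a gliding-hump/blocking argument (borrowed from the proof of Theorem~\ref{thm:wQNA:range:ell_1}) to produce disjointly supported $w^{(k)}$ with $Tw^{(k)}\xrightarrow{w^*}0$, forcing $u^*=0$; you instead note that $T=R^*$ for $R\in\Lin(Y,c_0)$, $Ry=(\alpha_n y_n^*(y))_n$, so $T$ is $\sigma(\ell_1,c_0)$-to-$\sigma(Y^*,Y)$ continuous, $T(B_{\ell_1})$ is already weak-star compact and hence weak-star closed, and $\wQNA$ collapses to $\NA$ for this operator --- which fails by the strict estimate $\|Tx\|\leq\sum_n\alpha_n|a_n|<1=\|T\|$ for $x=(a_n)\in B_{\ell_1}\setminus\{0\}$. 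You even give the compactness argument directly rather than leaning on the paper's unproved remark on adjoints, so the proof is self-contained. What your approach gives up is reusability: the paper's blocking machinery is what it later cites in Remarks~\ref{rem:circle_example} and~\ref{remark:depends-on-the-predual} (though, since the operators there are again diagonal adjoints, your adjoint trick would cover those cases too). Both arguments are complete.
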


\begin{proof}
If $Y$ is infinite dimensional, by the Josefson-Nissenzweig theorem (see, for instance, \cite[Chapter XII]{Diestel_seq_series}), there exists a sequence $(y_n^*)_{n \in \N} \subset S_{Y^*}$ which converges weak-star to $0$. Let us define $T \in \Lin (\ell_1, Y^*)$ by $T e_n = \frac{n}{n+1} y_n^*$ for each $n \in \N$. By the assumption, we have $T \in \wQNA (\ell_1 , Y^*)$. Hence, there exists a sequence $(x^{(n)})_{n\in \N}$ such that $T x^{(n)} \xrightarrow{w^*} u^*$ for some $u^* \in S_{Y^*}$. Arguing as in the proof of Theorem \ref{thm:wQNA:range:ell_1}, we may find a sequence $(w^{(k)})_{k \in \N} \subset B_{\ell_1}$ with increasing sequences of natural numbers $(m_j)_{j\in\N}$ and $(n_j)_{j\in\N}$ satisfying
\begin{enumerate}
\item $w^{(k)} = (\underbrace{0 , \ldots \ldots, 0}_{m_{n_{k-1}}\text{ many terms}}, w_{m_{n_{k-1}}+1}^{(n_{k})}, \ldots, w_{m_{n_{k}}}^{(n_{k})}, 0, 0, \ldots)$, \label{property_(1)_of_w^k}
\item $\|w^{(k)} - x^{(n_k)} \| \xrightarrow[k \rightarrow \infty]{} 0$. \label{property_(2)_of_w^k}
\end{enumerate}
From \eqref{property_(2)_of_w^k}, we get that $T w^{(k)}  \xrightarrow{w^*} u^*$. Note from \eqref{property_(1)_of_w^k} that for $y \in Y$,
\begin{align*}
\left| [T w^{(k)}] (y) \right| \leq \sum_{j=m_{n_{k-1}} + 1}^{m_{n_k}}  \frac{j}{j+1} \left| w_j^{(n_k)} \right| \left| y_j^* (y)\right| \leq \sup_{ m_{n_{k-1}} + 1 \leq j \leq m_{n_k} } |y_j^* (y)|.
\end{align*}
As $y_j^* \xrightarrow{w^*} 0$, we can conclude that $[Tw^{(k)}](y) \xrightarrow[k \rightarrow \infty]{} 0$ for each $y \in Y$, that is, $Tw^{(k)} \xrightarrow{w^*} 0$. This implies that $u^* = 0$, a contradiction.
\end{proof}

Of course, the previous result is a characterization of finite-dimensionality (use \cite[Remark~1.4.(b)]{CCJM}, for instance).

We may also present a new proof of a result related to remotality. A bounded subset $E$ of a Banach space $X$ is said to be \emph{remotal from $x\in X$} if there is $e_x\in E$ such that $\|x-e_x\|=\sup\{\|x-e\|\colon e\in E\}$, and $E$ is said to be \emph{remotal} if it is remotal from all elements in $X$ (see \cite{Baronti-Papini} for background). Notice that in order to find a non-remotal (closed convex) subset of $X$, it suffices to find a (closed convex) subset $E$ of the open unit ball of $X$ such that $\sup\{\|e\|\colon e\in E\}=1$.

Now, observe that if $T\in \Lin(X,Y^*)\setminus \wQNA(X,Y^*)$ has norm one, then the set $K=\overline{T(B_X)}^{w^*}$ is contained in the open unit ball of $Y^*$ but $\sup_{y^* \in K} \|y^*\|=1$. Therefore, the next result on remotality for duals of separable spaces can be proved immediately using Theorem~\ref{Y_sep_wQNA_ell_1_Y*}. We note that the same statement for general real Banach spaces is proved in \cite[Corollary~2]{MartinRao}.

\begin{example}\label{example:remotal}
Let $Y$ be a (real or complex) separable infinite-dimensional Banach space. Then, there is a weak-star compact and convex subset of $Y^*$ which is not remotal.
\end{example}

Next, as a consequence of the proof of Theorem \ref{Y_sep_wQNA_ell_1_Y*}, we get that the hypothesis $y^* \in \NA(Y, \mathbb{K})$ in item (a) of Proposition~\ref{prop:NA_or_QNA_to_wQNA} cannot be removed, as the following remark shows.

\begin{remark}\label{rem:circle_example}
Let $T \in \wQNA (c_0, \ell_1^*)$ be the operator given in Example \ref{prop:wQNA(c_0,ell_infty)}, that is, $Tx = (\frac{n}{n+1} x_n )_{n \in \N}$ for every $x = (x_n)_{n\in \N} \in c_0$. Note that $T^\circ \in \Lin (\ell_1, c_0^*)$ is given by $T^\circ e_n = \frac{n}{n+1} e_n$ for each $n \in \N$. Then the proof of Theorem \ref{Y_sep_wQNA_ell_1_Y*} shows that $T^\circ \notin \wQNA (\ell_1, c_0^*)$.
\end{remark}

The same example also shows that the notion of the weak-star norm attainment depends on preduals of the range space that we are dealing with.

\begin{remark}\label{remark:depends-on-the-predual}
Consider the operator $S \in \Lin (\ell_1, \ell_1)$ given by $S e_n = \frac{n}{n+1} e_n$ for each $n \in \N$. If we consider $\ell_1=c_0^*$ in the usual way, then $(e_n)$ is weak-star convergent to $0$, hence the proof of Theorem \ref{Y_sep_wQNA_ell_1_Y*} gives that $S\notin \wQNA(\ell_1,c_0^*)$. On the other hand, if we consider $\ell_1=c^*$, then $(e_n)$ is weak-star convergent to the limit functional, which is of norm-one. Hence, $(Se_n)=\bigl(\frac{n}{n+1} e_n\bigr)$ is also weak-star convergent to the limit functional, hence $S\in \wQNA(\ell_1,c^*)$.
\end{remark}

\vspace*{0.5cm}


\noindent\textbf{Funding.}
The first author was supported by Basic Science Research Program through the National Research Foundation of Korea (NRF) funded by the Ministry of Education (2022R1A6A3A01086079). The second author was supported by NRF (NRF-2019R1A2C1003857), by POSTECH Basic Science Research Institute Grant (NRF-2021R1A6A1A10042944) and by a KIAS Individual Grant (MG086601) at Korea Institute for Advanced Study. The third author was supported by the National Research Foundation of Korea(NRF) grant funded by the Korea government(MSIT) [NRF-2020R1C1C1A01012267]. The fourth author supported by PID2021-122126NB-C31 (MCIN/AEI/FEDER, UE, AEI/10.13039/501100011033), by Junta de Andaluc\'ia I+D+i grants P20\_00255 and FQM-185, and by ``Maria de Maeztu'' Excellence Unit IMAG, reference CEX2020-001105-M funded by \phantom{---} MCIN/AEI/10.13039/501100011033.


\begin{thebibliography}{99}



\bibitem{Aco-survey} \textsc{M.~D.~Acosta}, \emph{Denseness of norm attaining mappings}, {RACSAM} \textbf{100} (2006), 9--30.


\bibitem{AGM} \textsc{R.~M.~Aron, D.~Garc\'ia, and M.~Maestre}, \emph{On norm attaining polynomials}, {Publ. Res. Inst. Math. Sci.} \textbf{39} (2003) 165--172.

\bibitem{Baronti-Papini} \textsc{M.~Baronti and P.~L.~Papini}, \emph{Remotal sets revisited}, {Taiwanese J. Math.} \textbf{5} (2001), 367--373.

\bibitem{B2} \textsc{J.~Bourgain}, \emph{On dentability and the Bishop-Phelps property}, {Israel J. Math.} \textbf{28} (1977), 265--271.

\bibitem{CCJM} \textsc{G.~Choi, Y.~S.~Choi, M.~Jung, M.~Mart\'in}, \emph{On quasi norm attaining operators between Banach spaces}, {RACSAM} \textbf{116} (2022), article 133.


\bibitem{Diestel_seq_series} \textsc{J.~Diestel}, \emph{Sequences and Series in
Banach spaces}, Graduate Texts in Mathematics \textbf{92},
Springer-Verlag, New York, 1984.


\bibitem{FHHMZ} \textsc{M.~Fabian, P.~Habala, P.~H\'ajek, V.~Montesinos, and V.~Zizler}, \emph{Banach Space Theory: The Basis for Linear and Nonlinear Analysis}, CMS Books in Mathematics, Springer New York, 2010.





\bibitem{Gowers} \textsc{W.~Gowers}, \emph{Symmetric block bases of sequences with large average growth}, {Israel J. Math.} \textbf{69} (1990), 129--149.


\bibitem{Huff} \textsc{R.~Huff}, \emph{On non-density of norm-attaining operators}, {Rev. Roum. Math. Pures et Appl.} \textbf{25} (1980), 239--241.

\bibitem{KGM} \textsc{V.~Kadets, G.~L\'opez-P\'erez, and M.~Mart\'in} \emph{Some geometric properties of Read's space}, {J. Funct. Anal.} \textbf{274} (2018), 889--899.

\bibitem{Lindenstrauss} \textsc{J.~Lindenstrauss}, \emph{On operators which attain their norm}, {Israel J. Math.} \textbf{1} (1963), 139--148.

\bibitem{martinjfa} \textsc{M.~Mart\'in}, \emph{Norm-attaining compact operators}, {J. Funct. Anal.} \textbf{267} (2014), 1585--1592.

\bibitem{M-RACSAM}  \textsc{M.~Mart\'{\i}n}, \emph{The version for compact operators of Lindenstrauss properties~A and~B}, \emph{RACSAM} \textbf{110} (2016), 269--284.

\bibitem{MartinRao} \textsc{M.~Mart\'{\i}n and T.~S.~S.~R.~K.~Rao}, \emph{On remotality for convex sets in Banach spaces}, \emph{J. Approx. Theory} \textbf{162} (2010), 392--396.

\bibitem{M} \textsc{R.~E.~Megginson}, \emph{An introduction to Banach space theory}, Graduate Texts in Mathematics \textbf{183}, Springer-Verlag, New York, 1998.


\bibitem{PZ} \textsc{R.~A.~Poliquin and V.~E.~Zizler}, \emph{Optimization of convex functions on w$^*$-compact sets}, {Manuscripta Math.} \textbf{68} (1990), 249--270.

\bibitem{Z} \textsc{V.~Zizler}, \emph{On some extremal problems in Banach spaes}, {Math. Scand.} \textbf{32} (1973), 214--224.
\end{thebibliography}
\end{document}